\newtheorem{theorem}{Theorem}
\newtheorem{lemma}[theorem]{Lemma}
\newtheorem{corollary}[theorem]{Corollary}
\newtheorem{conjecture}{Conjecture}
\def\inst#1{$^{#1}$}
\title{On forbidden configurations in \\point-line incidence graphs\thanks{M. Balko was supported by grant no. 23-04949X of the Czech Science Foundation (GA\v{C}R) and by the Center for Foundations of Modern Computer Science (Charles Univ. project UNCE 24/SCI/008). N. Frankl was partially supported by ERC Advanced Grant "GeoScape".
} 
}
\author{
Martin Balko\inst{1}
\and
N\'{o}ra Frankl\inst{2}}
\begin{document}

\maketitle

\begin{center}
{\footnotesize
\inst{1} 
Department of Applied Mathematics, \\
Faculty of Mathematics and Physics, Charles University, Czech Republic \\
\texttt{balko@kam.mff.cuni.cz}\\
\inst{2} 
School of Mathematics and Statistics, The Open University, UK, \\and Alfr\'{e}d R\'{e}nyi Institute of Mathematics, Hungary \\
\texttt{nora.frankl@open.ac.uk}\\
}
\end{center}

\begin{abstract}
The celebrated \emph{Szemer\'edi--Trotter theorem} states that the maximum number of incidences between $n$ points and $n$ lines in the plane is
$O(n^{4/3})$, which is asymptotically tight.
Solymosi (2005) conjectured that for any set of points $P_0$ and for any set of lines $\mathcal{L}_0$ in the plane, the maximum number of incidences between $n$ points and $n$ lines in the plane whose incidence graph does not contain the incidence graph of $(P_0,\mathcal{L}_0)$ is $o(n^{4/3})$.
This conjecture is mentioned in the book of Brass, Moser, and Pach (2005). Even a stronger conjecture, which states that the bound can be improved to $O(n^{4/3-\varepsilon})$ for some $\varepsilon = \varepsilon(P_0,\mathcal{L}_0)>0$, was introduced by Mirzaei and Suk (2021).
We disprove both of these conjectures.
We also introduce a new approach for proving the upper bound $O(n^{4/3-\varepsilon})$ on the number of incidences for configurations $(P,\mathcal{L})$ that avoid certain subconfigurations.
\end{abstract}

\section{Introduction}

Let $P$ be a set of $m$ points and $\mathcal{L}$ be a set of $n$ lines in the plane.
We define $I(P,\mathcal{L})$ to be the set of \emph{incidences} between points from $P$ and lines from $\mathcal{L}$.
That is, $I(P,\mathcal{L})$ is the set of ordered pairs $(p,L)$ such that $p \in P$, $L \in \mathcal{L}$, and $p \in L$.
We write $G(P,\mathcal{L})$ to denote the \emph{incidence graph} for $P$ and $\mathcal{L}$.
This (oriented) graph has vertex set $P \cup \mathcal{L}$ and edge set $I(P,\mathcal{L})$.
More precisely, a pair $(p,L)$ with $p \in P$ and $L \in \mathcal{L}$ is an edge of $G(P,\mathcal{L})$ if and only if $p \in L$.

The celebrated \emph{Szemer\'edi--Trotter theorem}~\cite{szemTro83} states that every set $P$ of $m$ points and every set $\mathcal{L}$ of $n$ lines in the plane satisfy
\[|I(P,\mathcal{L})| \in O((mn)^{2/3}+m+n).\]
This bound is asymptotically tight, which can be seen by taking the $\sqrt{n} \times \sqrt{n}$ integer lattice
and bundles of parallel ``rich'' lines; see~\cite{pachAga95}.
In this paper, we focus on bounding the maximum number of point-line incidences in configurations $(P,\mathcal{L})$ that have some fixed forbidden induced subgraph in $G(P,\mathcal{L})$.
The study of such problems was initiated by Solymosi~\cite{solymosi06} and attracted considerable attention recently~\cite{mirSuk21,mirSukVer19,tomSuk21,sukZeng23}.

Let $P_1$ and $P_2$ be two sets of points in the plane and $\mathcal{L}_1$ and $\mathcal{L}_2$ be two sets of lines in the plane.
We say that $(P_1,\mathcal{L}_1)$ and $(P_2,\mathcal{L}_2)$ are \emph{isomorphic} if the graphs $G(P_1,\mathcal{L}_1)$ and $G(P_2,\mathcal{L}_2)$ are isomorphic.

Solymosi posed the following conjecture, which can be found in the book by Brass, Moser, and Pach~\cite[p.~291]{bmp05}.

\begin{conjecture}[\cite{bmp05}]
\label{conj-Solymosi}
For any set of points $P_0$ and for any set of lines $\mathcal{L}_0$ in the plane, the maximum number of incidences between $n$ points and $n$ lines in the plane containing no subconfiguration isomorphic to $(P_0,\mathcal{L}_0)$ is $o(n^{4/3})$.
\end{conjecture}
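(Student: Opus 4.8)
Since the abstract announces that this conjecture is \emph{false}, the task I set myself is to refute it: to exhibit one fixed configuration $(P_0,\mathcal{L}_0)$ together with an infinite family of configurations of $n$ points and $n$ lines, each containing no induced subconfiguration isomorphic to $(P_0,\mathcal{L}_0)$, yet each having $\Omega(n^{4/3})$ incidences. The first point I would stress is how much freedom the statement grants the refuter: the conjecture quantifies over \emph{all} $(P_0,\mathcal{L}_0)$, so it suffices to defeat it for a single, possibly large and intricate, forbidden configuration. A large forbidden $(P_0,\mathcal{L}_0)$ imposes only a weak local constraint on $G(P,\mathcal{L})$, which is exactly what one wants when trying to keep the incidence count at the Szemer\'edi--Trotter ceiling.

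My starting point would be a tight Szemer\'edi--Trotter example, say the lattice construction with $P=[N]\times[2N^2]$ and lines $y=mx+c$ for $m\in[N]$, $c\in[N^2]$, which gives $n=\Theta(N^3)$ points and lines and $\Theta(n^{4/3})$ incidences, with every line meeting $\Theta(n^{1/3})$ points. The two structural facts I would lean on are that every incidence graph is $K_{2,2}$-free (two points lie on at most one common line), so its induced subgraphs are severely restricted, and that the line family of this construction is itself confined to a prescribed range of slopes and intercepts. The plan is then to choose $(P_0,\mathcal{L}_0)$ so that its incidence graph $H=G(P_0,\mathcal{L}_0)$ either (a) does not embed as an induced subgraph of $G(P,\mathcal{L})$ at all, because realizing $H$ would demand an incidence pattern incompatible with the restricted slope/intercept family, or (b) embeds only rarely, in a way that can be cleaned away cheaply.

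For route (b) I would use a deletion argument, exploiting the key feature of \emph{induced} containment: deleting a point or a line from $G(P,\mathcal{L})$ never creates a new induced copy of $H$ on the surviving vertices, since the subgraph induced on a set avoiding the deleted vertex is unchanged. Thus it suffices to delete a set of points and lines meeting at least one vertex of every induced copy of $H$, and to argue that such a hitting set can be taken sparse enough that $\Omega(n^{4/3})$ incidences survive; after rescaling $n$, the resulting configuration on $\Theta(n)$ points and lines is induced-$(P_0,\mathcal{L}_0)$-free with the desired incidence count. This reduces the problem to controlling the number and, more importantly, the \emph{spread} of the induced copies of $H$ in the extremal example: if no small set of points and lines is incident to an overwhelming fraction of all copies, a greedy or probabilistic removal retains a positive proportion of the incidences. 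Route (a), by contrast, asks for a provable non-realizability: a configuration $(P_0,\mathcal{L}_0)$ whose incidence pattern simply cannot occur among the available grid points and the prescribed lines.

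The main obstacle, as I see it, is precisely the tension at the heart of the conjecture: Szemer\'edi--Trotter--extremal configurations are nearly pattern-universal, so it is delicate to pin down a forbidden $(P_0,\mathcal{L}_0)$ that a rich construction genuinely avoids while still paying only an $o(1)$ fraction of its incidences. Verifying that the final construction has $\Omega(n^{4/3})$ incidences is routine once the construction is fixed; the real work is either establishing the non-embedding in route (a) or bounding the distribution of induced copies in route (b). I would expect the cleanest disproof to come from tailoring $(P_0,\mathcal{L}_0)$ to an algebraic feature of a specially structured extremal family, rather than from the plain lattice, so that avoidance becomes a transparent consequence of the construction's symmetry.
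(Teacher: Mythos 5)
Your reading of the task is correct (the conjecture is to be refuted by one fixed $(P_0,\mathcal{L}_0)$ avoided by an $\Omega(n^{4/3})$-incidence family), and your route (a) is the shape of the paper's argument, but as written the proposal contains no construction and no non-embeddability proof, and the two mechanisms you sketch would not close the gap. The paper's refutation rests on exactly two ingredients that are missing here. First, \emph{rigidity}: a configuration whose incidence graph alone pins down its geometric realization. Generic incidence graphs admit wildly different realizations, so ``an incidence pattern incompatible with the restricted slope/intercept family'' is not something you can verify copy by copy. The paper builds an \emph{extended regular $k$-gon}: the $k$ vertices of a regular $k$-gon, together with $k$ further points $t_1,\dots,t_k$ recording the $k$ slopes of the $\binom{k}{2}$ connecting lines, and one extra line $L_\infty$ through $t_1,\dots,t_k$; a theorem of Jamison (any $k$ points in general position determining exactly $k$ slopes form an affine image of a regular $k$-gon) then shows that \emph{every} realization of this incidence graph is a projective image of the extended regular $k$-gon. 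Second, a \emph{projectively invariant} obstruction: since realizations are determined only up to projective transformations, slopes and intercepts — your suggested lever — are useless, as they are not projective invariants and the hypothetical copy may sit anywhere in $(P,\mathcal{L})$. The paper instead evaluates a cross-ratio of four collinear points of the configuration, equal to $1+2\cos(2\pi/k)$, and invokes Niven/Lehmer-type theorems: for $k\notin\{3,4,6\}$ (and for $k$ a suitable prime when points have algebraic coordinates of degree at most $d$) this quantity has too large an algebraic degree to arise from points of $\mathbb{Q}^2$ (resp.\ $V_d$). Since the lattice Szemer\'edi--Trotter-extremal configurations have all points rational, $G(P_0,\mathcal{L}_0)$ is not even a subgraph of their incidence graphs, which is the refutation.

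Your route (b) is a dead end at this threshold and should be discarded rather than left open. You give no bound on the number or distribution of copies of $H$ in an extremal configuration, and none is available in the generality you need: any $(P_0,\mathcal{L}_0)$ realizable with rational coordinates typically appears polynomially often in lattice-type extremal families, so no cheap hitting set exists; this is precisely why the paper's forbidden configuration is chosen to be \emph{unrealizable} over $\mathbb{Q}^2$, making the count of copies zero rather than merely small. Moreover, the random-sampling-plus-deletion technique that does work for such cleaning (used in the paper's Theorem~\ref{thm-lowerbound} for subdivided $k$-cliques, following Suk and Tomon) inherently loses a polynomial factor and lands at $n^{5/4-1/(2k)+o(1)}$, strictly below $n^{4/3}$, so it cannot disprove an $o(n^{4/3})$ statement. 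One further point: your insistence on \emph{induced} containment is the right reading of ``subconfiguration,'' but the paper proves the stronger assertion that $G(P_0,\mathcal{L}_0)$ is not a subgraph at all, which costs nothing extra in its approach and makes the counterexample robust (e.g., one may add to $\mathcal{L}_0$ all lines spanned by $P_0$ and still retain $\Omega(n^{4/3})$ incidences).
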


Solymosi~\cite{solymosi06} proved this conjecture in the special case that $P_0$ is a fixed set of
$k$ points in the plane in \emph{general position}, that is, no three points from $P_0$ lie on a common line, and $\mathcal{L}_0$ is the set of all lines determined by points from $P_0$.
Such configuration $(P_0,\mathcal{L}_0)$ is called a \emph{$k$-clique}. 
Mirzaei and Suk~\cite{mirSuk21} proved the conjecture for point sets that do not contain grids. 
In particular, they proved the following result.

\begin{theorem}[\cite{mirSuk21}]
\label{thm-mirsuk}
For a fixed $t\geq 2$ let $\mathcal{L}_a$ and $\mathcal{L}_b$ be two sets of $t$ lines in the plane, and let $P_0=\{\ell_a\cap \ell_b : \ell_a\in \mathcal{L}_a, \ell_b\in \mathcal{L}_b\}$ such that $|P_0|=t^2$. Then there is a constant $c=c(t)$ such that any arrangement of $m$ points and $n$ lines in the plane that does not contain a subconfiguration isomorphic to $(P_0,\mathcal{L}_a\cup \mathcal{L}_b)$ determines at most \[c(m^{\frac{2t-2}{3t-2}}n^{\frac{2t-1}{3t-2}}+m^{1+\frac{1}{6t-3}}+n)\] incidences.
\end{theorem}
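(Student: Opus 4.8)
The plan is to translate the geometric hypothesis into an extremal problem about the bipartite incidence graph $G(P,\mathcal{L})$ and then to beat the Szemer\'edi--Trotter bound by a power of the richness using the forbidden grid. Observe first that the incidence graph of $(P_0,\mathcal{L}_a\cup\mathcal{L}_b)$ is the graph obtained from $K_{t,t}$ (with sides $\mathcal{L}_a$ and $\mathcal{L}_b$) by subdividing every edge once, the subdividing vertices being the $t^2$ points of $P_0$. Since two points lie on at most one common line and two lines meet in at most one point, $G(P,\mathcal{L})$ is always $K_{2,2}$-free; this is the extra structural input beyond merely forbidding the grid. I record the key reduction: if we can find $t$ lines $A$ and $t$ other lines $B$ so that every $a\in A$ meets every $b\in B$ at a point of $P$ and the resulting $t^2$ intersection points are pairwise distinct, then $K_{2,2}$-freeness automatically rules out any further incidence among these $2t$ lines and $t^2$ points, so the induced subconfiguration is exactly a grid. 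Distinctness fails only when three of the chosen lines are concurrent at a point of $P$, so the whole difficulty is to locate a ``combinatorial $K_{t,t}$'' in the meeting pattern of the lines while avoiding triple concurrences.

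The next step is a dyadic decomposition of $\mathcal{L}$ by richness. Lines carrying a bounded number of points contribute only $O(n)$ incidences, so I focus on the number $N_{\ge k}$ of lines incident to at least $k$ points. The heart of the argument is the following strengthening of Szemer\'edi--Trotter for grid-free configurations: if $(P,\mathcal{L})$ contains no grid, then $N_{\ge k}=O(m^2/k^{3+1/(t-1)})$, i.e.\ the Szemer\'edi--Trotter count $O(m^2/k^3)$ of $k$-rich lines improves by the factor $k^{1/(t-1)}$. Granting this, the theorem follows by a routine optimisation: splitting $\mathcal{L}$ into lines of richness below and above a threshold $k_0$ and summing the (geometrically decaying) series $\sum_{k\ge k_0}N_{\ge k}\cdot k=O(m^2/k_0^{2+1/(t-1)})$ against the trivial bound $n k_0$ for the poorer lines, the balance $k_0=(m^2/n)^{(t-1)/(3t-2)}$ produces the main term $m^{(2t-2)/(3t-2)}n^{(2t-1)/(3t-2)}$, with the secondary term $m^{1+1/(6t-3)}$ coming from the boundary of the range in which the key estimate applies.

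To prove the key estimate I would form the \emph{meeting graph} $G^\ast$ on the $N:=N_{\ge k}$ rich lines, joining two of them whenever they cross at a point of $P$; the edges of $G^\ast$ split into cliques, one per intersection point, the clique at $p$ having size equal to the number of rich lines through $p$. By convexity the number of edges is at least $N^2k^2/(2m)$, since each rich line meets at least $k$ points. A grid corresponds to a $K_{t,t}$ in $G^\ast$ whose $t^2$ edges lie in $t^2$ distinct cliques, so by the reduction above it suffices to find such a ``rainbow'' $K_{t,t}$. I would first delete the \emph{heavy} points, those lying on more than $D:=Cm/k^2$ rich lines: by Szemer\'edi--Trotter their removal destroys only a constant fraction of the edges of $G^\ast$ while capping every surviving clique at size $D$. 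A Kővári--Sós--Turán argument on the pruned graph then yields a large $K_{s,s}$, and the bounded clique sizes let one extract a rainbow $K_{t,t}$ from it. Tracking how the threshold for $K_{s,s}$ depends on $N$, $D$, and $s$, against the edge lower bound $N^2k^2/m$, is exactly what converts the naive exponent into the sharp $k^{1/(t-1)}$ saving.

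The main obstacle is precisely this last step. Applied \emph{directly} to $G^\ast$, the Kővári--Sós--Turán bound only gives $N_{\ge k}=O(m^t/k^{2t})$, which for $t\ge 3$ is too weak to beat Szemer\'edi--Trotter in the balanced regime $m\approx n$ (there it recovers only $n^{4/3}$). Extracting the sharp exponent forces me to exploit the clique-partition structure of $G^\ast$, equivalently genuine Szemer\'edi--Trotter information about the distribution of point-degrees, rather than treating $G^\ast$ as an arbitrary graph, and to tune the heavy-point threshold $D$ so that the cost of pruning matches the gain in the rainbow extraction. It is this optimisation that I expect to be the delicate core of the proof, and that simultaneously accounts for the intermediate term $m^{1+1/(6t-3)}$.
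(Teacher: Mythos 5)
Your top-level scheme is correctly reverse-engineered: forbidding $(P_0,\mathcal{L}_a\cup\mathcal{L}_b)$ is indeed equivalent to forbidding a ``rainbow'' $K_{t,t}$ in the meeting pattern of lines (all $t^2$ intersection points distinct and in $P$), your key lemma $N_{\ge k}=O\bigl(m^2/k^{3+1/(t-1)}\bigr)$ is equivalent to the theorem, and the dyadic summation with threshold $k_0=(m^2/n)^{(t-1)/(3t-2)}$ would finish correctly. The genuine gap is that the key lemma is never proved, and the route you sketch for it fails quantitatively at both of its steps. First, a density (K\H{o}v\'ari--S\'os--Tur\'an) argument on the meeting graph $G^\ast$ can only ever produce the exponent $1/t$: from $e(G^\ast)\gtrsim N^2k^2/m$ and $K_{t,t}$-freeness you get $N\lesssim (m/k^2)^t$, as you concede, and pruning heavy points does not change the KST exponent, because $G^\ast$ retains no information beyond which pairs of lines meet in $P$. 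Second, the rainbow extraction collapses: with point-cliques capped at $D=Cm/k^2$, a greedy extraction of a rainbow $K_{t,t}$ from a $K_{s,s}$ needs $s\gtrsim\binom{2t}{2}D\sim m/k^2$ (each used point excludes up to $D$ lines), but at the available density $e\sim N^2k^2/m$ KST guarantees a $K_{s,s}$ only when $N^{1/s}\gtrsim m/k^2$, i.e.\ $s\lesssim \log N/\log(m/k^2)$ --- exponentially short of $m/k^2$ in every nontrivial regime. Moreover your claim that pruning ``destroys only a constant fraction of the edges'' is false in general: a single point lying on a large fraction of the rich lines carries $\Theta(N^2)$ edges of $G^\ast$. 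So the ``delicate core'' you flag is not a tunable optimisation; the mechanism producing $1/(t-1)$ is simply absent from the plan.

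The paper supplies that mechanism topologically rather than extremally, and it simultaneously dissolves your rainbow problem. Working in the dual, it builds a graph $G$ on the $n$ dual points whose edges join the $(2i-1)$st and $(2i)$th points along each of the $m$ dual lines ($i$ odd), so $G$ has at least $(I-m)/2$ edges and, crucially, no two edges of $G$ share a line; hence any $K_{t,t}$ in $G$ automatically has its $t^2$ edges on distinct lines and dualizes to a grid with $t^2$ \emph{distinct} points --- no pruning or extraction needed --- and $G$ is drawn along lines with at most $\binom{m}{2}$ crossings. Feeding $K_{t,t}$-freeness into the Pach--Spencer--T\'oth crossing lemma (Theorem~\ref{thm:crossing}) gives $\textrm{cr}(G)\ge c\,e^{3+1/(t-1)}/n^{2+1/(t-1)}$, and comparing with $\binom{m}{2}$ yields $I=O\bigl(m^{\frac{2t-2}{3t-2}}n^{\frac{2t-1}{3t-2}}+m+n\bigr)$ in a few lines; the exponent $3+1/(t-1)$ your key lemma needs is exactly the crossing-number amplification of KST (proved via bisection width), which is the missing ingredient. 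Note this even improves the middle term $m^{1+1/(6t-3)}$ of the stated theorem to $m$ (Theorem~\ref{thm-grids}); the term $m^{1+1/(6t-3)}$ and the richness-boundary effects you anticipate belong to Mirzaei and Suk's original cutting-based argument, not to the proof given here.
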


This case was also considered by Suk and Tomon~\cite{tomSuk21} who also posed a special variant of Conjecture~\ref{conj-Solymosi} stating that for every fixed $k \geq 3$, every set of $n$ points and $n$ lines in the plane that do not contain a $k$-fan determines at most $o(n^{4/3})$ incidences.
Here, a \emph{$k$-fan} consists of $k+1$ points and $k+1$ lines such that $k$ points lie on a single line and the remaining $k$ lines connect them to the $(k+1)$st point.

Very recently, Mirzaei and Suk~\cite{mirSuk21} posed the following strengthening of Conjecture~\ref{conj-Solymosi}, whose statement is also mentioned by Brass, Moser, and Pach~\cite[p.~291]{bmp05}  for the configurations considered by Solymosi~\cite{solymosi06}.

\begin{conjecture}[\cite{mirSuk21}]
\label{conj-Suk}
For any set of points $P_0$ and for any set of lines $\mathcal{L}_0$ in the plane, there is a constant $\varepsilon = \varepsilon(P_0,\mathcal{L}_0)>0$ such that the maximum number of incidences between $n$ points and $n$ lines in the plane containing no subconfiguration isomorphic to $(P_0,\mathcal{L}_0)$ is $O(n^{4/3-\varepsilon})$.
\end{conjecture}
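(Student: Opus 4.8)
The statement is a \emph{conjecture} that this paper refutes, so I sketch a \emph{disproof}, that is, a construction of counterexamples. Since disproving Conjecture~\ref{conj-Solymosi} is stronger than disproving Conjecture~\ref{conj-Suk} (a family with $\Omega(n^{4/3})$ incidences is not $o(n^{4/3})$, hence a fortiori not $O(n^{4/3-\varepsilon})$), the plan is to exhibit a single fixed configuration $(P_0,\mathcal{L}_0)$ together with an infinite family of arrangements of $n$ points and $n$ lines, each containing no subconfiguration isomorphic to $(P_0,\mathcal{L}_0)$ yet determining $\Omega(n^{4/3})$ incidences. If only Conjecture~\ref{conj-Suk} is targeted, it suffices to reach $n^{4/3-o(1)}$ incidences, which already fails every fixed bound $O(n^{4/3-\varepsilon})$.

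First I would fix an explicit extremal family. A convenient choice is the asymmetric lattice $P=A\times[k^2]$ with rich lines $\{y=ax+b\}$, whose incidences are exactly the solutions of $b=y-ax$; for $A=[k]$ this gives $n=\Theta(k^3)$ points and lines and $\Theta(n^{4/3})$ incidences. The crucial idea is to make the \emph{appearance of the forbidden pattern equivalent to an arithmetic relation} among the coordinates that define the arrangement. Concretely, I would design $(P_0,\mathcal{L}_0)$ as a fixed affine ``gadget'', a bounded number of points and lines whose incidence graph can be completed inside the lattice only when some triple of coordinates satisfies a prescribed linear equation $E$ (for instance a midpoint relation $x+z=2y$, realised projectively by a complete quadrilateral, or a sum relation $x+y=z$). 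The gadget is a constant-size configuration, so it remains fixed as $n\to\infty$.

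The construction then replaces the interval $A=[k]$ by a set $A\subseteq[k]$ that avoids solutions of $E$. To disprove Conjecture~\ref{conj-Suk} I would take $A$ to be a Behrend set with no three-term arithmetic progression, of density $k^{-o(1)}$; the incidence count drops by at most a $k^{-o(1)}$ factor, yielding $n^{4/3-o(1)}$ incidences with no copy of the midpoint gadget. To disprove the stronger Conjecture~\ref{conj-Solymosi} I would instead choose $E$ to be a \emph{non-invariant} equation that a positive-density set can avoid, for example $x+y=z$, avoided up to lower-order terms by the middle-third interval, so that $A$ has density $\Omega(1)$ and the arrangement keeps a constant fraction of the extremal $\Theta(n^{4/3})$ incidences while still omitting the gadget.

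The hard part will be the \emph{rigidity analysis}: proving that $(P_0,\mathcal{L}_0)$ occurs as a subconfiguration \emph{only} through the intended embedding, so that its absence is genuinely controlled by the equation $E$. The lattice is highly homogeneous and may realise the abstract incidence graph of the gadget through unintended embeddings (different choices of which lattice lines and points play which roles), and each such embedding must be shown to force a solution of $E$ in $A$. A second, more conceptual obstacle is a consistency check: the forbidden gadget must be provably \emph{not} a grid in the sense of Theorem~\ref{thm-mirsuk} nor a clique in the sense of Solymosi, since for those configurations the $o(n^{4/3})$ bound is already a theorem; this is exactly what forces $E$ to be a single ``one-dimensional'' relation rather than the two-dimensional additive structure underlying grids. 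The remaining steps, namely verifying that the gadget is genuinely realisable by points and lines and carrying out the density-versus-incidence bookkeeping, are routine once the rigidity analysis is in place.
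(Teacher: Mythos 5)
Your overall strategy---exhibit a fixed configuration $(P_0,\mathcal{L}_0)$ avoided by an arrangement with $\Omega(n^{4/3})$ incidences, thereby killing both conjectures at once---is exactly the paper's strategy. But your mechanism for achieving it has a genuine gap at precisely the step you flag as ``the hard part,'' and the gap is not merely technical: it cannot be closed in the form you propose. A forbidden subconfiguration is specified only through its incidence graph, and incidence graphs are invariant under projective transformations. Affine relations such as the midpoint relation $x+z=2y$ or the sum relation $x+y=z$ are \emph{not} projectively invariant, so no fixed incidence graph can force them. Concretely, the complete quadrilateral forces only the harmonic cross-ratio $(a,b;c,d)=-1$ on four collinear points; this degenerates to the midpoint relation only when $d$ is the point at infinity, and nothing in a subconfiguration of a planar arrangement can force a vertex to play the role of the point at infinity. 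Harmonic quadruples of rational points are ubiquitous in the lattice regardless of how you thin the coordinate set $A$, so your gadget would appear through projectively skewed embeddings that induce no solution of $E$ in $A$; the equivalence ``gadget occurs iff $E$ has a solution'' is false, and the Behrend (or middle-third) thinning does not prevent the gadget. A secondary point: even if rigidity held, the Behrend route only yields $n^{4/3-o(1)}$ incidences, refuting Conjecture~\ref{conj-Suk} but not Conjecture~\ref{conj-Solymosi}, whereas the paper refutes both with the same construction.

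The paper's resolution is instructive because it works \emph{with} projective invariance rather than against it. The forbidden configuration is the extended regular $k$-gon: the $k$-gon vertices $v_1,\dots,v_k$ together with $k$ points $t_1,\dots,t_k$ recording the $k$ slopes, all lying on one additional line (the image of the line at infinity). The key rigidity input is Jamison's theorem (Theorem~\ref{thm-directions}): a set of $k$ points in general position determining exactly $k$ slopes is an affine image of a regular $k$-gon. This makes the incidence graph $H_k$ realizable \emph{only} by projective images of the extended regular $k$-gon (Lemma~\ref{lem-realizationUnique}). The arithmetic obstruction is then a projective invariant rather than your linear equation $E$: the cross-ratio $(t_1,w;v_1,v_2)=1+2\cos(2\pi/k)$, whose square is irrational for all $k\geq 5$, $k\neq 6$ (via Niven's theorem), and of arbitrarily high algebraic degree for prime $k$ (via Lehmer's theorem). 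Hence no projective image of the configuration lies in $\mathbb{Q}^2$, or more generally in $V_d$ for suitable $k=k(d)$. The decisive consequence is that the \emph{unmodified} extremal lattice constructions (and even the Guth--Silier examples in $V_2$) already avoid the configuration while achieving $\Theta(n^{4/3})$ incidences---no thinning, density bookkeeping, or case analysis of unintended embeddings is needed. If you want to salvage your plan, the lesson is that the ``prescribed relation'' must be a cross-ratio value forced combinatorially by the incidence graph, and the obstruction must be number-theoretic (irrationality or high degree of that value), not additive-combinatorial.
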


\section{Our results}

Here, we refute both Conjecture~\ref{conj-Solymosi} and Conjecture~\ref{conj-Suk} by finding sets~$P_0$ of $2k$ points and sets $\mathcal{L}_0$ of $\binom{k}{2}+1$ lines such that any configuration of $n$ \emph{lattice} points and $n$ lines does not contain subconfiguration isomorphic to $(P_0,\mathcal{L}_0)$.
We call the configuration $(P_0,\mathcal{L}_0)$ an \emph{extended regular $k$-gon} (see Section~\ref{sec-gon} for the definition).
In fact, our result is more general and shows this for point sets where coordinates are algebraic numbers of bounded degree.

For a positive integer $d$, we use $V_d$ to denote the set of all points from $\mathbb{R}^2$ whose coordinates are algebraic numbers of degree at most $d$.
In particular, we have $V_1=\mathbb{Q}^2$.

\begin{theorem}
\label{thm-counterexample}
For every $d \in \mathbb{N}$, there is a $k=k(d) \in \mathbb{N}$ such that for all $m,n \in \mathbb{N}$, if $(P_0,\mathcal{L}_0)$ is an image of the extended regular $k$-gon via a projective transformation, then for every set $P$ of $m$ points from~$V_d$, and for each set $\mathcal{L}$ of $n$ lines in the plane, the graph $G(P_0,\mathcal{L}_0)$ is not a subgraph of $G(P,\mathcal{L})$.

Moreover, if $d=1$, then $G(P_0,\mathcal{L}_0)$ is not a subgraph of $G(P,\mathcal{L})$ if and only if $k \notin \{3,4,6\}$.
\end{theorem}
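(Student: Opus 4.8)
The plan is to show that the incidence pattern of the extended regular $k$-gon is rigid enough to pin down a projective invariant of high algebraic degree, and then to observe that points in $V_d$ cannot produce such an invariant. Recall from Section~\ref{sec-gon} that $(P_0,\mathcal{L}_0)$ consists of $k$ ``vertices'' $v_0,\dots,v_{k-1}$, of $k$ auxiliary points $w_0,\dots,w_{k-1}$ lying on one distinguished line $L$, of the $\binom{k}{2}$ ``chords'' $v_iv_j$, and of $L$ itself, with the incidences that $v_i,v_j,w_{i+j}$ are collinear for all $i\neq j$ (indices mod $k$) and that every $w_s$ lies on $L$. An embedding of $G(P_0,\mathcal{L}_0)$ into $G(P,\mathcal{L})$ assigns to these symbols \emph{distinct} points of $P$ and \emph{distinct} lines of $\mathcal{L}$ realizing exactly these incidences. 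First I would put the configuration into a projective normal form by a transformation sending $L$ to the line at infinity; I stress that this is used only to analyze the constraints, since the quantity I ultimately bound is projectively invariant. In this normal form the incidence relations become the statement that the \emph{direction} of the chord $v_iv_j$ depends only on $i+j\bmod k$.

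The heart of the argument, and what I expect to be the main obstacle, is the following rigidity claim: \emph{if the direction of $v_iv_j$ depends only on $i+j\bmod k$ and the resulting $k$ directions are pairwise distinct, then, in complex coordinates and up to a relabelling $i\mapsto mi$ with $\gcd(m,k)=1$, one has $v_i=A\omega^{i}+B\omega^{-i}+C$ with $\omega=e^{2\pi\sqrt{-1}/k}$, i.e.\ the $v_i$ form an affine-regular $k$-gon.} To prove it I would expand $v_i=\sum_r c_r\omega^{ri}$ in the discrete Fourier basis and rewrite $v_j-v_i$ as a combination of the functions $\delta\mapsto\sin(\pi r(j-i)/k)$; since these are independent in $\delta$, the direction condition forces, for each fixed value of $i+j$, the contributions of the several Fourier modes to be real multiples of one another. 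A short case analysis then shows that two distinct active frequency pairs force the $v_i$ to be collinear, and this degenerate case is excluded because collinear vertices make all $\binom{k}{2}$ chords coincide with $L$, contradicting the required distinctness of the chord-lines. Equivalently, one can argue synthetically that the incidences force the $v_i$ onto a conic on which they form a geometric-progression coset under the conic's group law; isolating the degree of the ``ratio'' $\omega$ is the delicate point.

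Granting the claim, for $k\ge4$ I would consider the cross-ratio $\chi$ of the four auxiliary points $w_0,w_1,w_2,w_3$. By rigidity these are four of the equally spaced directions of an affine-regular $k$-gon, so a direct computation gives
\[
\chi=\frac{4\cos^2(\pi m/k)}{4\cos^2(\pi m/k)-1}\qquad\text{for some }m\text{ coprime to }k,
\]
whence $\mathbb{Q}(\chi)=\mathbb{Q}(\cos(2\pi/k))$ and $[\mathbb{Q}(\chi):\mathbb{Q}]=\varphi(k)/2$. On the other hand, the cross-ratio of four collinear points is a projective invariant, so it may be evaluated in the \emph{original} coordinates, where $w_0,\dots,w_3\in V_d$; thus $\chi$ lies in the field generated by their at most eight coordinates, each algebraic of degree at most $d$, giving $[\mathbb{Q}(\chi):\mathbb{Q}]\le d^{8}$. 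Combining the two estimates forces $\varphi(k)/2\le d^{8}$. Since $\varphi(k)\to\infty$, choosing $k=k(d)\ge4$ with $\varphi(k)/2>d^{8}$ makes this impossible, so no embedding exists; this proves the first part.

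For the ``moreover'' statement I specialise to $d=1$, where $V_1=\mathbb{Q}^2$ forces $\chi\in\mathbb{Q}$ and hence $\varphi(k)/2=1$, i.e.\ $\cos(2\pi/k)\in\mathbb{Q}$, i.e.\ $k\in\{3,4,6\}$; the obstruction uses four distinct directions and is therefore vacuous only at $k=3$, which is consistent since any triangle realises the $3$-gon. This gives non-realisability over $\mathbb{Q}$ for every $k\notin\{3,4,6\}$. For the converse I would exhibit explicit rational realisations when $k\in\{3,4,6\}$: start from the affine-regular triangle, square, and hexagon, whose chord directions are \emph{rational} points at infinity, apply a projective transformation defined over $\mathbb{Q}$ carrying the line at infinity to a finite rational line missing all vertices, and finally clear denominators to obtain $2k$ distinct lattice points together with $\binom{k}{2}+1$ distinct lines realising all required incidences.
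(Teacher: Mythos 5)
Your proposal is correct, and it reaches the theorem by the same three-stage architecture as the paper---combinatorial rigidity of the incidence graph $H_k$, a projectively invariant cross-ratio of unbounded algebraic degree, and explicit rational realizations for $k\in\{3,4,6\}$---but both key lemmas are executed by genuinely different means. For rigidity, the paper does not prove anything like your Fourier claim: it observes that after sending $L$ to infinity the $k$ vertex points are in general position (three collinear $v$'s would force three distinct line-vertices of $H_k$ to map to one line) and determine exactly $k$ slopes, and then invokes Jamison's theorem that any such set is an affine image of a regular $k$-gon. Your DFT argument instead exploits the finer labeling encoded in $H_k$ (the direction of $\overline{v_iv_j}$ is a function of $i+j\bmod k$), which makes your rigidity statement strictly weaker than Jamison's and hence plausibly provable from scratch; this is the one place where real work remains in your write-up, since the parity bookkeeping for even $k$ and the claimed ``two distinct active frequency pairs force collinearity'' case analysis are asserted rather than carried out, whereas the paper settles the whole step by citation. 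For the invariant, the paper uses $(t_1,w;v_1,v_2)=1+2\cos\left(\frac{2\pi}{k}\right)$ where $w=\overline{v_1v_2}\cap\overline{v_{k-1}v_k}$ is an auxiliary point outside the configuration; it must then separately bound the degree of the coordinates of $w$ and work with the \emph{square} of the cross-ratio to avoid square roots from distances, and its number theory splits into Niven's theorem plus its quadratic extension (for $d=1$) and a polynomial-composition trick with Lehmer's theorem at prime $k$ (for general $d$, yielding only a degree lower bound). Your choice of the four slope points $w_0,\dots,w_3$, which belong to the configuration and hence lie in $V_d$ themselves, is cleaner: the cross-ratio is a rational function of their eight coordinates, so $[\mathbb{Q}(\chi):\mathbb{Q}]\le d^8$ is immediate by the tower law, and the identity $\mathbb{Q}(\chi)=\mathbb{Q}\left(\cos\left(\frac{2\pi m}{k}\right)\right)$ combined with Lehmer gives the exact degree $\varphi(k)/2$, treating general $d$ and the sharp $d=1$ characterization ($\varphi(k)\le 2$ iff $k\in\{1,2,3,4,6\}$) in one stroke, in place of the paper's two separate lemmas. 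One point you leave implicit but which does hold, and which both proofs need under mere (non-induced) subgraph containment: injectivity of the embedding together with the structure of $H_k$ forces the $k$ directions to be pairwise distinct and the $v_i$ to avoid $L$ (if $v_i\in L$, the line $L_{i,j}$ would contain two distinct points of $L$ and coincide with it), which is exactly the nondegeneracy your rigidity hypothesis and your cross-ratio computation require.
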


Since there are configurations $(P,\mathcal{L})$ of $n$ \emph{lattice} points and $n$ lines with $\Omega(n^{4/3})$ incidences, Theorem~\ref{thm-counterexample} gives a counterexample to Conjectures~\ref{conj-Solymosi} and~\ref{conj-Suk} by setting $d=1$.
This includes known configurations $(P,\mathcal{L})$ found by Erd\H{o}s (see~\cite{pachAga95}) and Elekes~\cite{elekes02} and their generalization found by Sheffer and Silier~\cite{shefSil24}; see~\cite{bsr24} for their analysis.

On the other hand, Guth and Silier~\cite{guthSil23} recently discovered sharp examples for the Szemer\'{e}di--Trotter theorem that are not based on a rectangular lattice.
Their examples use points from the set $S = \{x+y\sqrt{m} \colon x,y \in \mathbb{Z}\}^2$ where $m$ is a non-square integer.
Since $S \subseteq V_2$, it follows from Theorem~\ref{thm-counterexample} that their construction combined with an extended regular $k$-gon with $k=k(2)$ gives a counterexample to Conjectures~\ref{conj-Solymosi} and~\ref{conj-Suk} as well.

Note that the forbidden subconfiguration $G(P_0,\mathcal{L}_0)$ is excluded as a subgraph of $G(P,\mathcal{L})$ and not necessarily as an induced subgraph of $G(P,\mathcal{L})$. 
Thus, we can, for example, add lines to $\mathcal{L}_0$ so that we include all lines determined by points in $P_0$, similarly as in the case proved by Solymosi~\cite{solymosi06}, and still have $|I(P,\mathcal{L})| \in \Omega(n^{4/3})$.

In the spirit of Székely's proof~\cite{Szekely} of the Szemer\'{e}di--Trotter theorem using the Crossing lemma, we introduce a new approach for proving the bound $O(n^{4/3-\varepsilon})$ on the number of incidences in configurations $(P,\mathcal{L})$ that do not contain certain fixed subconfigurations $(P_0,\mathcal{L}_0)$.

We first illustrate this method by considering the configurations from Theorem~\ref{thm-mirsuk}.
In this setting, our approach yields a simpler proof that gives the following slightly better bound for some of the terms.

\begin{theorem}
\label{thm-grids}
For an integer $t \geq 2$, let $\mathcal{L}_a$ and $\mathcal{L}_b$ be two sets of $t$ lines in the plane, and let $P_0=\{\ell_a\cap \ell_b : \ell_a\in \mathcal{L}_a, \ell_b\in \mathcal{L}_b\}$ such that $|P_0|=t^2$.
Then there is a constant $c=c(t)$ such that any arrangement of $m$ points and $n$ lines in the plane that does not contain a subconfiguration isomorphic to $(P_0,\mathcal{L}_a\cup \mathcal{L}_b)$ determines at most 
\[c(m^{\frac{2t-2}{3t-2}} n^{\frac{2t-1}{3t-2}}+m+n)\] 
incidences. 
\end{theorem}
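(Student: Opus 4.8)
The plan is to follow Sz\'{e}kely's crossing-lemma strategy, feeding the forbidden-grid hypothesis in through a K\H{o}v\'{a}ri--S\'{o}s--Tur\'{a}n estimate, and to organize the count by the richness of the lines. Write $m=|P|$, $n=|\mathcal{L}|$, and $I=|I(P,\mathcal{L})|$. First I would discard every line carrying at most $t$ points and every point lying on at most one line; since each discarded line contributes $O(t)$ incidences and each discarded point contributes $O(1)$, this costs only $O(m+n)$, and this is exactly where one hopes to gain over Theorem~\ref{thm-mirsuk}, whose cutting-based argument pays the extra $m^{1/(6t-3)}$ factor precisely on these sparse pieces. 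For a dyadic parameter $k$, let $n_k$ denote the number of lines incident to at least $k$ points of $P$. Since $I=\sum_{\ell}|\ell\cap P|=\sum_{k\ge 1}n_k$, the problem reduces to a sharp upper bound on $n_k$ together with a dyadic summation.

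I would then assemble two ingredients. The first is Szemer\'{e}di--Trotter for rich lines, obtained in the spirit of Sz\'{e}kely: building the graph on $P$ that joins consecutive points along each of the $n_k$ rich lines (so it has $\gtrsim k\,n_k$ edges on $\le m$ vertices) and bounding its crossing number by $\binom{n_k}{2}$ yields $n_k=O(m^2/k^3)$. The second ingredient encodes grid-freeness. The incidence graph of any $t\times t$ grid is the subdivision of $K_{t,t}$, hence isomorphic to $G(P_0,\mathcal{L}_a\cup\mathcal{L}_b)$; so if we form the auxiliary graph $\Gamma$ on vertex set $\mathcal{L}$ with $\ell\sim\ell'$ iff $\ell\cap\ell'\in P$, then a grid is exactly a $K_{t,t}$ in $\Gamma$ whose $t^2$ crossing points are distinct. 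Thus (up to the concurrence subtlety below) $\Gamma$ is $K_{t,t}$-free, and K\H{o}v\'{a}ri--S\'{o}s--Tur\'{a}n gives $\sum_{p\in P}\binom{r_p}{2}=e(\Gamma)=O(n^{2-1/t})$, where $r_p$ is the number of lines through $p$.

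The heart of the argument is to fuse these two estimates into the single rich-line bound $n_k=O\!\left(m^2/k^{(3t-2)/(t-1)}\right)$, whose decay exponent $(3t-2)/(t-1)$ interpolates between the Szemer\'{e}di--Trotter exponent $3$ (as $t\to\infty$) and the stronger grid-free behaviour for small $t$. Granting this, I would set $n_k\le\min\{n,\,m^2/k^{(3t-2)/(t-1)}\}$; the two bounds cross at $k^{*}=(m^2/n)^{(t-1)/(3t-2)}$, the sum $\sum_k n_k$ is dominated by the flat regime $n_k=n$ for $k\le k^{*}$ because the tail is geometric ($(3t-2)/(t-1)>1$), and therefore $I\lesssim n\cdot k^{*}=m^{\frac{2t-2}{3t-2}}n^{\frac{2t-1}{3t-2}}$, as desired, with the discarded pieces supplying the $+m+n$.

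The main obstacle is proving the fused bound, which is genuinely stronger than either ingredient on its own, so simply taking the minimum of the Szemer\'{e}di--Trotter and K\H{o}v\'{a}ri--S\'{o}s--Tur\'{a}n bounds does not suffice. The delicacy is that the crossings controlled by $\Gamma$ occur at data points $p\in P$, whereas the crossings appearing in Sz\'{e}kely's drawing of the consecutive-points graph are generic segment intersections that avoid $P$; the two counts must be welded together rather than used in parallel. I expect the cleanest route is to partition the $n_k$ rich lines using the Szemer\'{e}di--Trotter estimate and apply the K\H{o}v\'{a}ri--S\'{o}s--Tur\'{a}n count within each part (equivalently, a weighted crossing-lemma count), optimizing the partition to land on the exponent $(3t-2)/(t-1)$. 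A secondary point to dispatch is the concurrence subtlety: a $K_{t,t}$ in $\Gamma$ whose crossing points are not all distinct need not yield a grid, and I would handle this either by forbidding a slightly larger $K_{t',t'}$ or by bounding the contribution of highly concurrent points separately.
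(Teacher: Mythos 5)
Your plan correctly identifies the two relevant forces (a Sz\'{e}kely-type crossing argument and $K_{t,t}$-freeness), but it has a genuine gap: the ``fused'' rich-line bound $n_k = O\bigl(m^2/k^{(3t-2)/(t-1)}\bigr)$, which you yourself call the heart of the argument, is left unproven, and it is not a step one can expect to dispatch by ``partitioning and optimizing'': it is essentially equivalent to the theorem itself. (Applying the theorem to the $n_k$ lines that are $k$-rich, so that $I \geq k n_k$, recovers exactly your bound; conversely, your dyadic summation derives the theorem from it.) So the proposal reduces the statement to an equally strong unproved claim. The missing ingredient is precisely the strengthened crossing lemma of Pach, Spencer, and T\'{o}th (Theorem~\ref{thm:crossing}): a graph with $n$ vertices, $e \geq 4n$ edges, and no $K_{t,t}$ has crossing number at least $c\,e^{3+1/(t-1)}/n^{2+1/(t-1)}$. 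This single inequality is the fusion you are looking for, and once it is available no dyadic decomposition is needed: the paper dualizes (taking the $n$ points dual to $\mathcal{L}$ and the $m$ lines dual to $P$), builds the graph joining the $i$th and $(i+1)$st dual point along each dual line only for odd $i$, notes that this graph has at least $(I-m)/2$ edges and is $K_{t,t}$-free, and compares the crossing-lemma lower bound with the trivial upper bound $\mathrm{cr}(G) \leq \binom{m}{2}$; solving for $I$ gives the exponents $\frac{2t-2}{3t-2}$ and $\frac{2t-1}{3t-2}$ directly.

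The concurrence subtlety is also more serious than you allow. Your auxiliary graph $\Gamma$ on $\mathcal{L}$ with $\ell \sim \ell'$ iff $\ell \cap \ell' \in P$ is not $K_{t,t}$-free, and neither fix you sketch works: a pencil of lines through a single point of $P$ produces $K_{t',t'}$ in $\Gamma$ for arbitrarily large $t'$ while containing no grid, so forbidding a larger complete bipartite graph cannot help, and in that regime the K\H{o}v\'{a}ri--S\'{o}s--Tur\'{a}n estimate $e(\Gamma)=O(n^{2-1/t})$ is simply false, since $e(\Gamma)$ can be $\binom{n}{2}$. The paper's matching construction is what resolves this: because the edges drawn along any single dual line form a matching (only odd $i$), two edges of a putative $K_{t,t}$ can never lie on the same line, and since two lines meet at most once this forces the $t^2$ intersection points to be pairwise distinct, so a $K_{t,t}$ in the graph genuinely yields the forbidden subconfiguration $(P_0,\mathcal{L}_a\cup\mathcal{L}_b)$. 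Adopting Theorem~\ref{thm:crossing} together with this matching trick makes your preprocessing and dyadic machinery unnecessary and closes both gaps.
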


For $m=n$, we obtain the bound $c n^{\frac{4}{3}-\frac{1}{9t-6}}$ on the number of incidences, which is the same as the bound obtained by Mirzaei and Suk~\cite{mirSuk21} in this case. 

Although our Crossing-lemma-based argument is not strong enough to improve the bound $o(n^{4/3})$ to $O(n^{4/3-\varepsilon})$ for $k$-cliques nor $k$-fans, it is sufficient to obtain such a bound for subdivided $k$-cliques.

For an integer $k \geq 3$ let $P_0$ be a set of $k+\binom{k}{2}$ points, where $k$ points are called \emph{black} and the remaining $\binom{k}{2}$ points are \emph{white}, and let $\mathcal{L}_0$ be a set of $2\binom{k}{2}$ lines in the plane.
We call the configuration $(P_0,\mathcal{L}_0)$ a \emph{subdivided $k$-clique} if, for any two black points, there are two lines from $\mathcal{L}_0$, each containing one of the black points, that intersect in a white point, each white point lies on exactly two lines from $\mathcal{L}_0$, and each line from $\mathcal{L}_0$ contains exactly one black and one white vertex.
That is, if two points from $P_0$ are connected by an edge if and only if they lie on a line from $\mathcal{L}_0$, then the resulting graph is a 1-subdivision of $K_k$.

\begin{theorem}
\label{thm-subdivision}
For an integer $k \geq 3$, there are constants $c=c(k)$ and $\delta=\delta(k)$ such that any arrangement of $m$ points and $n$ lines in the plane that does not contain a subconfiguration isomorphic to a subdivided $k$-clique determines at most 
\[c(m^{3/4-\delta}n^{1/2-\delta}+m\log^2{m}+n)\] 
incidences. 
\end{theorem}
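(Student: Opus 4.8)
The plan is to follow Sz\'ekely's crossing-lemma proof of the Szemer\'edi--Trotter theorem, but to replace the usual crossing inequality by its strengthening for graphs that avoid a fixed dense bipartite subgraph, feeding in the forbidden subdivided $k$-clique through a dependent-random-choice argument. First I would make the standard preliminary reductions: discarding the lines carrying at most one point and the points lying on at most one line costs only $O(m+n)$ incidences, so I may assume every line is rich and every point is busy. I then dyadically bucket the points by their degree and the lines by their richness; pigeonholing over these two parameters, it suffices to bound the incidences inside one near-regular sub-arrangement in which all $m'$ surviving points have degree in $[s,2s)$, all $n'$ surviving lines carry between $r$ and $2r$ points, and $I' \approx m's \approx n'r$. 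Summing the per-bucket bounds as a geometric series recovers the global estimate; since there are two dyadic parameters, this summation is exactly what produces the $\log^2 m$ factor, and I would keep the bookkeeping so that it falls only on the lower-order term.

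For the engine, on the $m'$ surviving points I build the Sz\'ekely graph $G$ whose edges join points consecutive along a common surviving line, so that $e(G) \ge I' - n'$, and drawing each edge along its segment gives a drawing with at most $\binom{n'}{2}$ crossings, since two edges can cross only at the intersection point of their two lines. Crucially $G$ is a subgraph of the collinearity graph, which is the union of the cliques spanned by the individual lines. Provided $G$ belongs to a host class whose extremal number is $O(v^{3/2})$ (the regime of $K_{2,t}$-free graphs), the Pach--Spencer--T\'oth strengthening of the crossing lemma gives $\mathrm{cr}(G) \ge c\,e(G)^4/v(G)^3$; combined with $\mathrm{cr}(G) \le \binom{n'}{2}$ and $v(G)\le m'$ this yields $e(G) = O(m'^{3/4}n'^{1/2})$, hence $I' = O(m'^{3/4}n'^{1/2}+n')$, which is exactly the shape we want.

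The heart of the matter, and the step I expect to be the main obstacle, is to certify that this strengthened inequality is available, because forbidding a subdivided $k$-clique does \emph{not} by itself forbid a single $K_{2,t}$ in $G$: the hypothesis is a global, clique-type condition (it forbids $k$ points pairwise linked through distinct intermediate points), whereas the crossing lemma wants a local, pair-type condition. I would bridge this gap by dependent random choice on the collinearity graph of the near-regular piece: if $I'$ exceeded $C_k\,m'^{3/4}n'^{1/2}$, its average codegree would be large enough to extract $k$ points that are pairwise richly connected, and the task is then to promote this abstract richness to an honest subdivided $k$-clique. This last part is purely geometric and is where the real work lies, since I must choose, for each of the $\binom{k}{2}$ pairs, a distinct white point lying off the line through the two corresponding black points, together with $2\binom{k}{2}$ pairwise distinct lines. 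Here near-regularity is essential: all degenerate common neighbours of a pair lie on the single line through that pair, so a codegree threshold slightly above $r$ leaves $\ge \binom{k}{2}$ non-degenerate corners and a greedy system-of-distinct-representatives argument realizes the white points and verifies that the lines are distinct. Controlling the interaction with exceptionally rich lines (whose presence could swallow the degenerate common neighbours) is the delicate point that must be handled carefully, and its resolution is what makes the extraction quantitatively compatible with the $v^{3/2}$ threshold.

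Finally I would assemble the pieces. The contradiction above gives the per-bucket bound $I' = O_k(m'^{3/4}n'^{1/2}+m'+n')$, which I sum over the $O(\log^2 m)$ dyadic buckets: the dominant terms $m'^{3/4}n'^{1/2}$ telescope to $O_k(m^{3/4}n^{1/2})$, the contributions linear in $n'$ sum to $O(n)$, and those linear in $m'$ absorb the polylogarithmic loss into $O(m\log^2 m)$, yielding the claimed $c(m^{3/4}n^{1/2}+m\log^2 m+n)$. The principal risk in this plan is precisely the dependent-random-choice step of the previous paragraph, namely guaranteeing all of the non-degeneracies simultaneously; everything else is a matter of routine dyadic bookkeeping and the now-standard generalized crossing lemma.
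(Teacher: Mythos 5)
Your skeleton — a Székely-type graph drawn along the lines, the Pach--Spencer--Tóth crossing lemma for a class with extremal number $O(v^{3/2})$, and $\mathrm{cr}(G)\leq\binom{n}{2}$ — is exactly the paper's skeleton, and you correctly located the heart of the matter in certifying the crossing-lemma hypothesis. But your bridge does not close the gap, and it is not the paper's bridge. First, the class you invoke is off: $G$ need not be $K_{2,t}$-free, and the paper never tries to make it so. Instead, the paper shows that the relevant monotone class is \emph{graphs containing no $1$-subdivision of $K_{R'}$} for a huge Ramsey number $R'=R_3(2R;7)$, $R=R_4(2k;25)$: if the Székely graph contains a $1$-subdivision of $K_{R'}$, two rounds of hypergraph Ramsey colorings (on triples with $7$ colors, then on $4$-tuples with $25$ colors) clean away all the degenerate collinearities — a line carrying an edge but passing through a second black vertex, or through two white vertices — and produce a genuine geometric subdivided $k$-clique; then Theorem~\ref{thm-TuranMaxDegree} (Alon--Krivelevich--Sudakov) gives $\mathrm{ex}(m,\cdot)=O(m^{3/2})$, feeding Theorem~\ref{thm:crossing_general} with $\alpha=1/2$. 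Your substitute — dependent random choice plus a greedy SDR choice of white points — fails precisely on the degeneracies you flag: a single bad line $\overline{b_i p}$ through a black vertex and an already-chosen vertex can swallow \emph{all} candidates of a pair (every candidate on that line connects to $b_i$ via that very line, and a line can carry up to $\deg(b_j)$ candidates), so a constant or even moderately large codegree does not let a greedy choice escape; quantitatively, in the relevant regime $m\approx n$ the codegrees available from $I'>C m'^{3/4}n'^{1/2}$ are far below what such counting needs. Beating adversarial coincidences of this type, without any quantitative codegree surplus, is exactly what the paper's Ramsey blow-up ($2k\to R\to R'$) is for; your sketch has no mechanism playing that role. (Relatedly, the paper takes only alternate consecutive pairs along each line, so that same-line edges are disjoint — a fact used repeatedly in the cleanup — whereas your graph joins all consecutive pairs.)

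A second, more mechanical defect: your dyadic bucketing is both unnecessary and harmful. In the paper the $m\log^2 m$ term comes solely from the edge-count threshold $e\geq cm\log^2 m$ in Theorem~\ref{thm:crossing_general}; the crossing argument is applied once, globally, with no regularization. Under your two-parameter bucketing, the incidences split as $I=\sum_{i,j}I_{i,j}$ with point classes of sizes $m_i$ ($\sum_i m_i= m$) and line classes of sizes $n_j$ ($\sum_j n_j=n$), and the per-bucket bounds sum to $\bigl(\sum_i m_i^{3/4}\bigr)\bigl(\sum_j n_j^{1/2}\bigr)$, which in general is only $O(m^{3/4}n^{1/2}\log m\log n)$ — the $\log^2$ lands on the \emph{main} term, not on $m$, so your bookkeeping as described does not recover the stated bound. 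Dropping the bucketing entirely (it buys you nothing here, since the forbidden-configuration hypothesis passes to the whole arrangement) and replacing the dependent-random-choice bridge by the Ramsey cleanup plus the Alon--Krivelevich--Sudakov extremal bound is what turns your outline into the paper's proof.
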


By setting $m=n$, we immediately obtain the following bound.

\begin{corollary}
\label{cor-subdivision}
For an integer $k \geq 3$, there are constants $c=c(k)$ and $\delta=\delta(k)$ such that any arrangement of $n$ points and $n$ lines in the plane that does not contain a subconfiguration isomorphic to a subdivided $k$-clique determines at most $c n^{5/4-\delta}$ incidences. 
\end{corollary}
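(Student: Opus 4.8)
The corollary is immediate from Theorem~\ref{thm-subdivision}: setting $m=n$ yields the bound $c(n^{3/4}n^{1/2}+n\log^2 n+n)=c(n^{5/4}+n\log^2 n+n)$, and since $n^{1/4}$ eventually dominates $\log^2 n$, the term $n^{5/4}$ absorbs the others and the bound becomes $O(n^{5/4})$. The content therefore lies entirely in Theorem~\ref{thm-subdivision}, so I describe how I would prove that statement. It is cleanest to argue by contraposition: I would fix a large constant $c=c(k)$ and show that whenever $|I(P,\mathcal{L})|\ge c(m^{3/4}n^{1/2}+m\log^2 m+n)$, the configuration $(P,\mathcal{L})$ must contain a subdivided $k$-clique. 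After discarding every line meeting at most one point and every point lying on at most one line---which costs only $O(m+n)$ incidences and accounts for the additive $+n$ term---I would pass to the \emph{collinearity graph} $G_{\mathrm{col}}$ on the vertex set $P$, joining two points exactly when the line through them belongs to $\mathcal{L}$.

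A single line $\ell$ carrying $r_\ell$ points contributes $\binom{r_\ell}{2}$ edges, so the number of collinear pairs is $e(G_{\mathrm{col}})=\sum_{\ell}\binom{r_\ell}{2}$. By Cauchy--Schwarz, $\sum_\ell r_\ell^2\ge(\sum_\ell r_\ell)^2/n=|I(P,\mathcal{L})|^2/n$, so a large incidence count forces many collinear pairs, namely $e(G_{\mathrm{col}})\gtrsim |I(P,\mathcal{L})|^2/n$. (Note that this direction is elementary; the crossing lemma, which produces \emph{upper} bounds of Szemer\'edi--Trotter type, will instead enter below to control degenerate collinear structure.)

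The key structural input is that a graph on $m$ vertices with at least $C(k)\,m^{3/2}$ edges contains a $1$-subdivision of $K_k$. This lies in the $C_4$-free, K\H{o}v\'ari--S\'os--Tur\'an regime: the $1$-subdivision of $K_k$ is bipartite with its subdivision vertices of degree $2$, and any bipartite graph one of whose sides has bounded degree $s$ has extremal number $O(m^{2-1/s})$, here $O(m^{3/2})$. Combining this with the previous step, once $|I(P,\mathcal{L})|^2/n\gtrsim m^{3/2}$, that is $|I(P,\mathcal{L})|\gtrsim m^{3/4}n^{1/2}$, the graph $G_{\mathrm{col}}$ contains a combinatorial $1$-subdivision of $K_k$: there are $k$ \emph{branch} points pairwise joined through distinct midpoints, each midpoint lying on a line through one branch point and on a second line through another. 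This is exactly where the exponent $m^{3/4}n^{1/2}$ originates.

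The remaining, and in my view principal, obstacle is to upgrade this \emph{combinatorial} $1$-subdivision to a genuine subdivided $k$-clique in the sense of the definition: the two lines realizing each subdivided edge must be distinct (no branch point, its midpoint, and the opposite branch point may be collinear), the $2\binom{k}{2}$ lines must be pairwise distinct, and each of them may contain exactly one branch and one midpoint of the configuration. To secure these genericity conditions I would first regularize the configuration by a dyadic decomposition of the points according to their degree and of the lines according to their richness, passing to a popular class; this is where the two logarithmic factors in $m\log^2 m$ are spent. Working inside a class in which every branch candidate meets many distinct lines, I would extract a $1$-subdivision of $K_{k'}$ for a suitably larger $k'=k'(k)$ and then prune the degenerate parts---the collinear coincidences that force two midpoints onto a common line through a branch point, whose number is bounded using Szemer\'edi--Trotter-type (equivalently, crossing-lemma) estimates on the incidences carried by rich lines---to isolate a generic sub-subdivision of $K_k$. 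This pruning is the technically delicate heart of the argument and, I expect, the reason the method cannot be pushed below the $m^{3/2}$ edge regime (hence the exponent $3/4$ rather than the tighter value suggested by, say, $C_6=$ the $1$-subdivision of $K_3$); the counting in the earlier steps is otherwise routine.
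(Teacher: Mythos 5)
Your first paragraph is exactly the paper's proof of the corollary: set $m=n$ in Theorem~\ref{thm-subdivision} and note that $n^{5/4}$ eventually dominates $n\log^2 n$. That part is correct and identical. The rest of your proposal, the sketch of Theorem~\ref{thm-subdivision} itself (which, as you say, carries all the content), takes a different route from the paper, and it has a genuine gap at its central step.

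You pass to the full collinearity graph $G_{\mathrm{col}}$, use Cauchy--Schwarz to get $e(G_{\mathrm{col}})\gtrsim I^2/n$, invoke $\mathrm{ex}(m,H)=O(m^{3/2})$ for $H$ the $1$-subdivision of $K_k$ to extract a combinatorial subdivision, and defer the geometric upgrade to a pruning step that you call ``technically delicate.'' The upgrade is not delicate; as you set it up, it is impossible. The density of $G_{\mathrm{col}}$ can be carried entirely by degenerate collinear structure: if all $m$ points lie on a single line of $\mathcal{L}$, then $G_{\mathrm{col}}=K_m$ has $\binom{m}{2}\gg m^{3/2}$ edges and contains combinatorial $1$-subdivisions of arbitrarily large cliques, yet the configuration contains no subdivided $k$-clique at all (a subdivided $k$-clique needs $2\binom{k}{2}$ distinct lines, each with exactly one black and one white vertex). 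So ``$G_{\mathrm{col}}$ dense and combinatorially rich'' is fully consistent with ``no subdivided $k$-clique,'' and no Ramsey-type or dyadic pruning acting on $G_{\mathrm{col}}$ alone can restore the implication, since the extremal-number step gives no control over which subdivision you find: it may consist exclusively of edges stacked along rich lines, e.g.\ with a branch point, a midpoint, and the opposite branch point collinear, so that the two ``subdividing'' lines coincide. A further red flag is that your scheme, if it worked, would prove the stronger bound $O(m^{3/4}n^{1/2}+m+n)$ with no logarithmic term at all.

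The paper's construction is designed precisely to kill this degeneracy at the source: its graph $G$ is not $G_{\mathrm{col}}$ but a matching along each line (edges between the $(2i-1)$-st and $2i$-th points), so two edges of $G$ sharing a vertex can never lie on a common line; this local property is what powers the two hypergraph-Ramsey cleanup rounds (with $R=R_4(2k;25)$ and $R'=R_3(2R;7)$) that convert a $1$-subdivision of $K_{R'}$ in $G$ into a genuine subdivided $k$-clique. The price is that $e(G)\approx (I-n)/2$ rather than $I^2/n$, so the Tur\'an bound alone would only give $I=O(m^{3/2}+n)$. The exponent $m^{3/4}n^{1/2}$ comes from the step your sketch misses entirely: since the edges of $G$ are drawn along the $n$ lines, $\mathrm{cr}(G)\leq\binom{n}{2}$, while the Pach--Spencer--T\'oth crossing lemma (Theorem~\ref{thm:crossing_general} with $\alpha=1/2$, applicable because $G$ avoids the $1$-subdivision of $K_{R'}$) gives $\mathrm{cr}(G)\geq c'e^4/m^3$ whenever $e\geq cm\log^2 m$; comparing the two yields $I=O(m^{3/4}n^{1/2})$. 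In particular, the $m\log^2 m$ term is the hypothesis of that crossing lemma, not the cost of a dyadic decomposition, and the crossing lemma enters as a \emph{lower} bound on the crossing number played against the trivial upper bound $\binom{n}{2}$, not as a Szemer\'edi--Trotter-type estimate used for pruning, as your sketch suggests.
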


For subdivided $k$-cliques, we also apply a method by Suk and Tomon~\cite{tomSuk21} to provide a lower bound that is fairly close to the upper bound from Corollary~\ref{cor-subdivision}.

\begin{theorem}
\label{thm-lowerbound}
For all positive integers $n$ and $k \geq 3$, there exists a point-line configuration $(P,\mathcal{L})$ such that $|P|=|\mathcal{L}|=n$ and 
\[|I(P,\mathcal{L})|\geq n^{\frac{5}{4}-\frac{1}{2k}+o(1)},\] and the incidence graph of $(P,\mathcal{L})$ does not contain a subconfiguration isomorphic to a subdivided $k$-clique.
\end{theorem}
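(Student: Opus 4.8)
The plan is to start from a point--line configuration that is tight for the Szemer\'edi--Trotter theorem and then randomly sparsify it, removing the few surviving subdivided $k$-cliques by an alteration argument in the spirit of Suk and Tomon~\cite{tomSuk21}. Concretely, I would take a standard tight example for Szemer\'edi--Trotter (see~\cite{pachAga95}): a configuration $(P_0,\mathcal{L}_0)$ of $N$ points and $N$ lines with $\Theta(N^{4/3})$ incidences in which every line is incident to $\Theta(N^{1/3})$ points and every point lies on $\Theta(N^{1/3})$ lines. I then retain each point of $P_0$ and each line of $\mathcal{L}_0$ independently with probability $p=n^{-1/8-3/(4k)}$, where $N:=n^{9/8+3/(4k)}$ is chosen so that $pN=n$. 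This yields a random subconfiguration $(P,\mathcal{L})$ with $\Theta(n)$ points, $\Theta(n)$ lines, expected incidence count $\Theta(p^2N^{4/3})=n^{5/4-1/(2k)}$, and typical line richness $d:=pN^{1/3}=n^{1/4-1/(2k)}$.

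The key quantity to control is the expected number $\mathbb{E}[X]$ of subconfigurations isomorphic to a subdivided $k$-clique in $(P,\mathcal{L})$. First I would bound the number of potential copies in the base configuration: a copy is specified by $k$ black points $b_1,\dots,b_k$, and then, for each of the $\binom{k}{2}$ pairs, a white point $w_{ij}$ collinear with both $b_i$ and $b_j$ through two \emph{distinct} lines of $\mathcal{L}_0$ (the non-degeneracy condition that $b_i,b_j,w_{ij}$ are not collinear, which is precisely what forces the two lines through $w_{ij}$ to differ). Using that each point has about $N^{2/3}$ collinear partners and that two points share about $N^{1/3}$ common collinear partners in non-degenerate position, one estimates the number of copies in the base configuration as $N^{k+\binom{k}{2}/3+o(1)}$. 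A fixed copy uses $\tfrac{k^2+k}{2}$ points and $k^2-k$ lines, so it survives the sparsification with probability $p^{(3k^2-k)/2}$. Multiplying these and substituting $N=n/p$, the chosen value of $p$ is exactly the threshold at which $\mathbb{E}[X]=n^{1+o(1)}$, which is also the value for which the expected incidence count equals $n^{5/4-1/(2k)+o(1)}$.

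For the alteration step I would consider the expectation of the net quantity $|I(P,\mathcal{L})|-c\,d\cdot X$ for a suitable absolute constant $c$. After increasing the exponent in $p$ by a sub-polynomial amount (which, since $\partial_\beta\log_n(d\,\mathbb{E}[X]/\mathbb{E}[|I|])$ is negative and of order $k^2$, drives the ratio $d\,\mathbb{E}[X]/\mathbb{E}[|I|]$ below $\tfrac{1}{2c}$ at the cost of only an $n^{o(1)}$ factor in the incidence count), this expectation is at least $n^{5/4-1/(2k)+o(1)}$. Fixing an outcome attaining at least this value, I delete one line from each of the $X$ surviving subdivided $k$-cliques: this destroys all copies while removing at most $c\,d\cdot X$ incidences, leaving at least $n^{5/4-1/(2k)+o(1)}$ incidences. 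Discarding or padding with points and lines in general position to reach exactly $n$ of each (which changes the incidence count by only $O(n)$ and creates no subdivided $k$-clique) gives the desired configuration, matching Corollary~\ref{cor-subdivision} up to the factor $n^{1/(2k)+o(1)}$.

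I expect the main obstacle to be the counting step. The base object is a grid, not a random set, so the estimates $\Theta(N^{2/3})$ and $\Theta(N^{1/3})$ for the numbers of collinear and commonly-collinear partners, and especially the separation of the non-degenerate copies from the degenerate ones in which many witnesses $w_{ij}$ lie on the line $b_ib_j$, must be established by a genuine incidence-geometric and number-theoretic analysis of the grid rather than by treating collinearity as random. Controlling the correlations between the $\binom{k}{2}$ pairs, which share black points and may share lines, so that the expected count of copies is not inflated beyond $n^{1+o(1)}$, is the delicate heart of the argument; once these grid estimates are in place, the sparsification and the alteration are routine.
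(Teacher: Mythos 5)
Your proposal is correct and takes essentially the same route as the paper: both randomly sparsify the standard grid configuration tight for Szemer\'edi--Trotter and then destroy the few surviving copies by deletion, and your exponents match the paper's exactly (your $p=n^{-1/8-3/(4k)}$ with $N=n^{9/8+3/(4k)}$ is the paper's $q=N^{-\frac{k+6}{9k+6}+o(1)}$, your copy count $N^{k+\binom{k}{2}/3+o(1)}$ is the paper's bound $N^{(k^2+5k)/6+o(1)}$, and the survival probability $q^{(3k^2-k)/2}$ is identical). The grid estimate you flag as the main obstacle --- that two points have at most $N^{1/3+o(1)}$ common collinear partners --- is precisely Claim~10 of Suk and Tomon~\cite{tomSuk21}, which the paper imports without reproving, so no new incidence-geometric analysis is actually required (and no degeneracy separation either, since that bound upper-counts all white-vertex choices at once).
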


\paragraph{Open problems}

It is possible that our Crossing-lemma-based approach applies to other forbidden configurations such as cycles in the incidence graph, which is a motivation for future work.

The smallest sets $P_0$ and $\mathcal{L}_0$ that we found such that there are sets of $n$ points and $n$ lines in the plane with no copy of $G(P_0,\mathcal{L}_0)$ in their incidence graph that determine $\Theta(n^{4/3})$ incidences, contain 10 points and 11 lines, respectively.
Although we did not try to find minimum counterexamples, it might be interesting to see what the smallest such configuration $(P_0,\mathcal{L}_0)$ is.

Even though Conjectures~\ref{conj-Solymosi} and~\ref{conj-Suk} do not hold in general, a natural problem is to determine classes of configurations $(P_0,\mathcal{L}_0)$ for which the maximum number of incidences is $o(n^{4/3})$.
In particular, deciding whether forbidding a 3-fan reduces the maximum number of incidences to $o(n^{4/3})$ remains open.

\paragraph{Note added}
After finishing this paper, we learned that, recently, Solymosi~\cite{solymosi} independently disproved Conjectures~\ref{conj-Solymosi} and~\ref{conj-Suk} in an unpublished manuscript.
His idea uses configurations that are not embeddable in the plane with rational coordinates.
In particular, Solymosi uses an arrangement of nine points and nine lines found by Perles (see~\cite{ziegler08} and~\cite{grun03}), which is by one point smaller than our smallest counterexample for $d=1$.

\section{Proof of Theorem~\ref{thm-counterexample}}
\label{sec-gon}

To prove Theorem~\ref{thm-counterexample}, we use a variant of the well-known and easy-to-prove fact that regular $k$-gons with $k \neq 4$ cannot be subsets of integer lattices.
For $k \geq 3$, we will show that by adding $k$ additional points representing the slopes of the lines determined by points of a regular $k$-gon and by adding $\binom{k}{2}+1$ lines to regular $k$-gons, the incidence graph of the resulting configuration $(P_0,\mathcal{L}_0)$ determines~$P_0$ up to a projective transformation.
This is because, by a result of Jamison~\cite{jamison86}, any set of $k$ points in general position determining only $k$ slopes is an affine image of a regular $k$-gon.
We show that, by choosing suitable $k$, the set $V_d$ does not contain an image of the resulting extended regular $k$-gon formed by $2k$ points via any projective transformation.
We then finish the proof by showing that this is true for $V_1$ if and only if $k \notin \{3,4,6\}$.

For a positive integer $n$ and a set $S$, we use $[n]$ to denote the set $\{1,\dots,n\}$ and write $\binom{S}{n}$ for the set of all unordered $n$-tuples of distinct elements from~$S$.
If $p$ and $q$ are distinct points, then we use $pq$ and $\overline{pq}$ to denote the line segment and the line, respectively, determined by $p$ and $q$.

For an integer $k \geq 3$, we start by defining the following bipartite graph~$H_k$, which, as we will prove later, can be realized as an incidence graph $G(P_0,\mathcal{L}_0)$ for some $P_0$ and $\mathcal{L}_0$; see Figure~\ref{fig-extendedRegular} for an illustration.
We let the vertex set of $H_k$ be
\[\{v_1,\dots,v_k,t_1,\dots,t_k\} \cup \left\{L_{i,j} \colon \{i,j\} \in \binom{[k]}{2}\right\}\cup\{L_\infty\}.\]
The edges of $H_k$ are the following pairs (where indices are taken modulo $k$):
\begin{enumerate}[(a)]
    \item $(v_i,L_{i,j})$ and $(v_j,L_{i,j})$ for every $\{i,j\} \in \binom{[k]}{2}$,
    \item $(t_{2i-1},L_{i-s,i+s+1})$ for all $i \in [\lceil k/2 \rceil]$ and $s \in \{0,\dots,\lfloor k/2\rfloor -1\}$,
    \item $(t_{2i},L_{i-s,i+s+2})$ for all $i \in [\lfloor k/2\rfloor]$ and $s \in \{0,\dots,\lceil k/2\rceil-2\}$,
    \item $(t_i, L_\infty)$ for every $i \in [k]$.
\end{enumerate} 

We now prove that $H_k$ can be realized as an incidence graph. 

\begin{figure}[ht]
    \centering
    \includegraphics{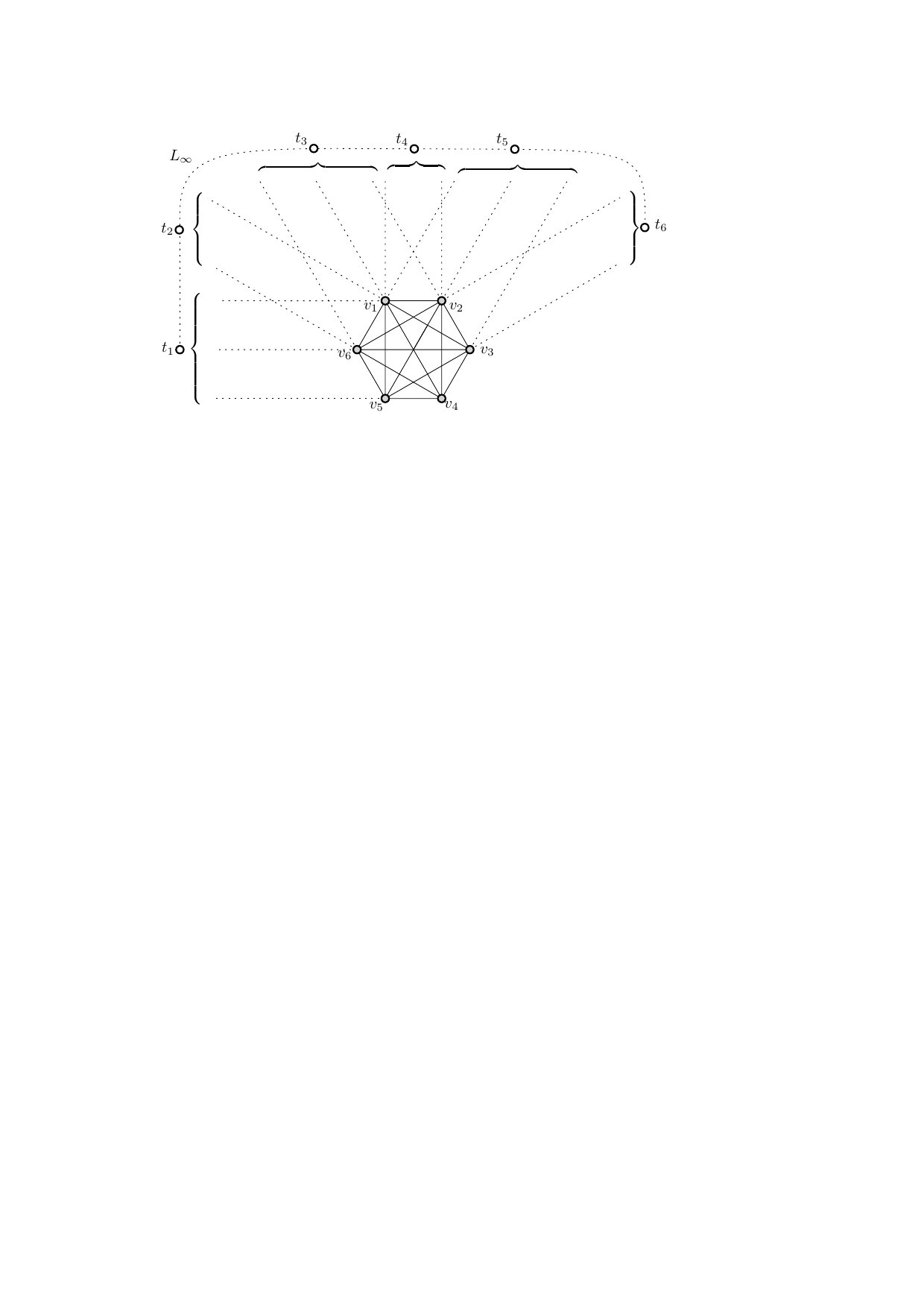}
    \caption{Realizing the graph $H_6$. The lines $L_{i,j}$ correspond to the lines determined by points $v_i$ and $v_j$.}
    \label{fig-extendedRegular}
\end{figure}

\begin{lemma}
\label{lem-realization}
For each integer $k \geq 3$, there is a set of points $P_0$ and a set of lines~$\mathcal{L}_0$ in the plane such that $H_k= G(P_0,\mathcal{L}_0)$.
\end{lemma}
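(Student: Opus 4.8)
The plan is to realize $H_k$ geometrically using a regular $k$-gon, where the points $v_1,\dots,v_k$ are the vertices, the lines $L_{i,j}$ are the chords $\overline{v_iv_j}$, and the points $t_1,\dots,t_k$ encode the \emph{directions} of these chords, all lying on the line $L_\infty$. Since directions live most naturally at infinity, I would first build the configuration in the real projective plane $\mathbb{RP}^2$, taking $L_\infty$ to be the line at infinity and each $t_m$ to be the point at infinity shared by one parallel class of chords, and only at the end transport everything to $\mathbb{R}^2$ by a projective transformation.

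Concretely, place $v_j = (\cos(2\pi j/k), \sin(2\pi j/k))$ for $j \in [k]$ and set $L_{i,j} = \overline{v_iv_j}$. Because the $v_j$ are vertices of a convex polygon, no three are collinear, so the $\binom{k}{2}$ lines $L_{i,j}$ are pairwise distinct and realize exactly the incidences in~(a). The chord $\overline{v_iv_j}$ is perpendicular to the radius at angle $(i+j)\pi/k$, so its direction depends only on $(i+j) \bmod k$, and distinct residues modulo $k$ yield distinct directions. Thus the chords split into exactly $k$ parallel classes, one per residue, and I would let $t_m \in \mathbb{RP}^2$ be the point at infinity of the class with index sum $\equiv m+2 \pmod k$ and let $L_\infty$ be the line at infinity. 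A line passes through a given point at infinity precisely when it has the corresponding direction, which immediately gives incidence~(d) (all $t_m$ lie on $L_\infty$) and identifies the neighborhood of each $t_m$ among the $L_{i,j}$ with a single parallel class.

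The technical heart of the argument is to check that this single parallel class is described exactly by the index patterns in~(b) and~(c). For $t_{2i-1}$ the incident chords $L_{i-s,i+s+1}$ have index sum $(i-s)+(i+s+1) = 2i+1 \equiv (2i-1)+2$, and for $t_{2i}$ the chords $L_{i-s,i+s+2}$ have sum $2i+2$, so in both cases the sum matches the class assigned to $t_m$. I would then verify, separately for $k$ even and $k$ odd, that as $s$ ranges over the stated intervals the pairs $\{i-s,i+s+1\}$ (respectively $\{i-s,i+s+2\}$) are distinct, nondegenerate, and exhaust the whole parallel class: counting ordered solutions of $a+b \equiv c \pmod k$ and discarding the diagonal $a=b$ shows that each class of odd index sum has $\lfloor k/2\rfloor$ chords and each class of even index sum has $\lceil k/2\rceil - 1$ chords, exactly the number of admissible values of $s$ in~(b) and~(c). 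A parity check confirms that the indices $2i-1$ and $2i$ sweep out all of $[k]$. Since no further incidences are possible (no three $v_j$ are collinear, the $v_j$ are finite while the $t_m$ are at infinity, and a line contains $t_m$ only if it has the matching direction), the incidence graph of this projective configuration is exactly $H_k$. I expect this bookkeeping—distinctness, nondegeneracy, and the even/odd parity split—to be the main obstacle.

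Finally, I would move the configuration into the affine plane. The points $t_m$ currently lie at infinity, so I would choose a line $\ell'$ in $\mathbb{RP}^2$ avoiding all $2k$ points $v_1,\dots,v_k,t_1,\dots,t_k$ (which is possible since a generic line misses any fixed finite set of points) and a projective transformation $\varphi$ taking $\ell'$ onto the line at infinity. Then $\varphi$ sends every $v_j$ and every $t_m$ to an ordinary point of $\mathbb{R}^2$, and sends each $L_{i,j}$ and $L_\infty$ to an ordinary line, because none of these lines equals $\ell'$ (each contains one of the avoided points). As projective transformations are incidence-preserving bijections of points and lines, the graph $G(P_0,\mathcal{L}_0)$ with $P_0 = \varphi(\{v_j\} \cup \{t_m\})$ and $\mathcal{L}_0 = \varphi(\{L_{i,j}\} \cup \{L_\infty\})$ equals $H_k$, completing the proof.
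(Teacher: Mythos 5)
Your proposal is correct and follows essentially the same route as the paper's proof: realize $H_k$ in the real projective plane with the $v_j$ as vertices of a regular $k$-gon, the $t_m$ as the $k$ slope points on the line at infinity $L_\infty$, and then move everything into $\mathbb{R}^2$ by a projective transformation that keeps all $2k$ points affine. The only difference is that you explicitly carry out the modular index bookkeeping verifying that the edge sets (b) and (c) match the parallel classes of chords, which the paper asserts without detail; your verification (index sums $2i+1$ and $2i+2$ modulo $k$, class sizes $\lfloor k/2\rfloor$ and $\lceil k/2\rceil-1$) is accurate.
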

\begin{proof}
See Figure~\ref{fig-extendedRegular} for an illustration.
Let $R$ be the set of vertices $v_1,\dots,v_k$ of a regular $k$-gon in the real projective plane.
The points in $R$ determine exactly $k$ slopes, and we let $t_1,\dots,t_k$ be the points on the line at infinity such that every line determined by two points from $R$ contains some point $t_i$.
We call the resulting set $Q$ of $2k$ points $v_1,\dots,v_k,t_1,\dots,t_k$ in the real projective plane an \emph{extended regular $k$-gon}.
Note that $H_k$ is the incidence graph of $Q$ together with the set containing all lines $\overline{v_iv_j}$ and the line $L_{\infty}$ in infinity.

After applying a projective transformation $h$ that sends all points $v_1,\dots,v_k,\allowbreak t_1,\dots,t_k$ to the real plane, we obtain the point set $P_0 = h(Q)$ in the plane. 
By letting $\mathcal{L}_0$ to be the set of lines $\overline{h(v_i)h(v_j)}$ together with the line $h(L_\infty)$ containing the points $h(t_1),\dots,h(t_k)$, we get $H_k = G(P_0,\mathcal{L}_0)$ since $h$ preserves point-line incidences.
\end{proof}

The following lemma implies that this realization of $H_k$ is unique up to a projective transformation.

\begin{lemma}
\label{lem-realizationUnique}
For every integer $k \geq 3$, if $P_0$ is a set of points and $\mathcal{L}_0$ is a set of lines in the plane such that $H_k=G(P_0,\mathcal{L}_0)$, then $P_0$ is an image of an extended regular $k$-gon via a projective transformation.
\end{lemma}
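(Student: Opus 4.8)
The plan is to read off from the abstract graph $H_k$ enough geometric constraints on any realization $(P_0,\mathcal{L}_0)$ to force the point set $\{v_1,\dots,v_k\}$ to be an affine-regular $k$-gon, and then to invoke Jamison's theorem~\cite{jamison86}. Throughout I interpret $H_k=G(P_0,\mathcal{L}_0)$ as an equality of incidence graphs, so that distinct vertices of $H_k$ correspond to distinct points (respectively distinct lines) and the \emph{only} incidences present are the edges of $H_k$. First I would normalize the picture: since the edges of type (d) place every $t_\ell$ on the line realizing $L_\infty$, I apply a projective transformation $g$ sending $L_\infty$ to the line at infinity. Because $H_k$ has no edge $(v_i,L_\infty)$, none of the $v_i$ lie on $L_\infty$, so each $g(v_i)$ is an ordinary (affine) point; and since $t_1,\dots,t_k$ are $k$ distinct points of $L_\infty$, their images $g(t_1),\dots,g(t_k)$ are $k$ distinct points at infinity, that is, $k$ distinct directions.

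Next I would establish that $g(v_1),\dots,g(v_k)$ are in general position. Suppose three of the points, say $v_a,v_b,v_c$, were collinear. The line $L_{a,b}\in\mathcal{L}_0$ contains the distinct points $v_a,v_b$, so it is exactly the line $\overline{v_av_b}$, which by assumption also contains $v_c$; hence $(v_c,L_{a,b})$ would be an incidence and therefore an edge of $G(P_0,\mathcal{L}_0)=H_k$. This contradicts the definition of $H_k$, in which $L_{a,b}$ is adjacent to no $v$-vertex other than $v_a,v_b$. Since collinearity is preserved by $g$, it follows that no three of $g(v_1),\dots,g(v_k)$ are collinear.

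The key step is then to bound the number of slopes. Each line $L_{i,j}$ passes through $v_i\notin L_\infty$, so $L_{i,j}\neq L_\infty$ and hence $L_{i,j}$ meets $L_\infty$ in at most one point; thus $L_{i,j}$ is adjacent to at most one $t_\ell$. On the other hand, by the construction of $H_k$ (as witnessed by the realization in Lemma~\ref{lem-realization}, where $L_{i,j}=\overline{v_iv_j}$ passes through the direction point of its chord), the edges of types (b) and (c) make every $L_{i,j}$ adjacent to \emph{exactly} one $t_\ell$. Consequently, after applying $g$, the connecting line $\overline{g(v_i)g(v_j)}$ passes through the point at infinity $g(t_\ell)$, i.e.\ it has direction $g(t_\ell)$. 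Since general position guarantees that all $\binom{k}{2}$ connecting lines of $g(v_1),\dots,g(v_k)$ are distinct and each of them carries one of the $k$ directions $g(t_1),\dots,g(t_k)$, the $k$ points $g(v_1),\dots,g(v_k)$ determine at most $k$ slopes. By Jamison's theorem~\cite{jamison86}, a set of $k$ points in general position determining only $k$ slopes is an affine image of a regular $k$-gon, so there is an affine transformation $A$ with $g(v_i)=A(r_i)$, where $r_1,\dots,r_k$ are the vertices of a regular $k$-gon.

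Finally I would reassemble the full configuration. Since $A$ is affine it fixes the line at infinity and carries the direction of $\overline{r_ir_j}$ to the direction of $\overline{A(r_i)A(r_j)}=\overline{g(v_i)g(v_j)}$; as the latter direction is $g(t_\ell)$, the point at infinity $g(t_\ell)$ is the $A$-image of the corresponding direction point $\tau_\ell$ of the regular $k$-gon. Writing $Q=\{r_1,\dots,r_k,\tau_1,\dots,\tau_k\}$ for the extended regular $k$-gon, we obtain $g(P_0)=A(Q)$, and therefore $P_0=(g^{-1}\circ A)(Q)$. As $g^{-1}\circ A$ is a composition of projective maps it is projective, which proves the claim. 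The hard part is really the input from Jamison's theorem, which is what converts the weak combinatorial data ``general position and at most $k$ slopes'' into the rigid conclusion of affine-regularity; the rest is bookkeeping, the only genuinely delicate points being the use of the exact equality $G(P_0,\mathcal{L}_0)=H_k$ to rule out degeneracies such as coincident points, extra collinearities, and some $v_i$ landing on $L_\infty$.
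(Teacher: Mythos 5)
Your proof is correct and follows essentially the same route as the paper's: send the line realizing $L_\infty$ to infinity, deduce general position and the slope bound from the combinatorial structure of $H_k$, invoke Jamison's theorem, and undo the transformations (your direction of composition, $P_0=(g^{-1}\circ A)(Q)$, is in fact the cleaner one). The only cosmetic slippage is that you conclude ``at most $k$ slopes'' while Jamison's theorem as stated requires ``exactly $k$''; since every $t_\ell$ has a neighbor $L_{i,j}$ in $H_k$, each of the $k$ distinct directions $g(t_\ell)$ is realized by some connecting line, so equality holds --- which is exactly the observation the paper makes --- and your application is valid.
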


To prove this lemma, we use the following result of Jamison~\cite{jamison86}, who proved that the affine images of regular $k$-gons are the only sets of $k$ points in general position determining exactly $k$ slopes.

\begin{theorem}[\cite{jamison86}]
\label{thm-directions}
Any set of $k \geq 3$ points in the plane, in general position, that determines
exactly $k$ slopes, is affinely equivalent to $k$ of the vertices of a regular $k$-gon.
\end{theorem}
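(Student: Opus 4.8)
The plan is to recast the slope condition as a statement about the \emph{circular sequence} of the points (in the spirit of Goodman--Pollack and Ungar), extract from the equality case ($k$ slopes) a rigid combinatorial pattern, and then bootstrap that pattern into affine regularity. Let $S=\{p_1,\dots,p_k\}$ be the point set, in general position, determining a slope set $D$ with $|D|=k$. Since no three points are collinear, the $k-1$ segments from any fixed point have $k-1$ distinct slopes, so every point \emph{misses} exactly one slope of $D$. Now rotate a direction $u_\theta=(\cos\theta,\sin\theta)$ and record the linear order of the projections $\langle p_i,u_\theta\rangle$. As $\theta$ increases from $0$ to $\pi$ this order reverses; it changes exactly at the $k$ angles where $u_\theta$ is orthogonal to a slope of $D$, and at such a critical angle the pairs that swap are precisely those spanning a segment of that slope. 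General position guarantees these swapping pairs are \emph{disjoint adjacent transpositions} of the current order.

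The counting step is then routine. The total number of transpositions over $\theta\in[0,\pi)$ equals the number of inversions between a permutation and its reverse, namely $\binom{k}{2}$, since each unordered pair swaps exactly once. Each of the $k$ critical angles carries at most $\lfloor k/2\rfloor$ disjoint adjacent transpositions. For odd $k$ one has $\binom{k}{2}=k\cdot\frac{k-1}{2}=k\lfloor k/2\rfloor$, so the equality $|D|=k$ forces \emph{every} critical angle to realize a maximum matching of $\frac{k-1}{2}$ transpositions; the single unmatched point at each angle then witnesses that the map sending a point to its missed slope is a bijection. For even $k$ the naive bound only gives $k-1$ moves, and a separate parity argument (the element in an extreme position cannot be matched at the extreme angles) is required to restore the count $k$ and the analogous maximal structure. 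Either way, the extremal configuration has a completely rigid circular sequence, and the pattern of missed slopes is cyclic.

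The remaining, and genuinely substantive, step is to convert this order-theoretic rigidity into affine data. One shows that the rigid circular sequence forces $S$ into convex position and pins down the entire parallelism pattern among the $\binom{k}{2}$ chords: reading indices cyclically around the hull, $p_ip_j$ and $p_{i'}p_{j'}$ are parallel exactly when $i+j\equiv i'+j'$, which is precisely the parallelism pattern exhibited by the vertices of a regular $k$-gon. Finally I would invoke the classical characterization that a convex $k$-gon with this parallelism pattern is affinely regular: the resulting system of parallelism relations determines the vertices, up to an affine map, as the image of the $k$-th roots of unity, exhibiting an affine map $A$ with $A(p_i)=p_{i+1}$ and hence an affine equivalence with the regular $k$-gon.

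The hard part is this last bootstrap: translating the purely combinatorial rigidity of the extremal circular sequence into the metric/affine conclusion, and in particular ruling out sporadic non-convex realizations of the same abstract pattern. The counting lower bound and the equality condition ``each move is a maximum matching'' are the easy scaffolding; the real content of Jamison's theorem lies in showing that no configuration other than the affine-regular polygon can support such a maximally rigid pattern, together with the extra parity bookkeeping needed to handle even $k$.
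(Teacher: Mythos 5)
First, a point of order: the paper does not prove this statement at all --- Theorem~\ref{thm-directions} is imported verbatim from Jamison~\cite{jamison86} and used as a black box in the proof of Lemma~\ref{lem-realizationUnique}. So there is no in-paper argument to compare yours against; your attempt has to stand on its own as a proof of Jamison's theorem, and it does not. The allowable-sequence scaffolding you set up is fine and standard (Goodman--Pollack circular sequences, as in Ungar's proof of Scott's conjecture): the total of $\binom{k}{2}$ swaps, the fact that general position makes the swaps at each critical angle a set of disjoint adjacent transpositions, the bound of $\lfloor k/2\rfloor$ swaps per angle, and for odd $k$ the equality $\binom{k}{2}=k\lfloor k/2\rfloor$ forcing every move to be a near-perfect matching with the ``missed slope'' map a bijection. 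But this is, as you say yourself, only the easy scaffolding, and the two steps that carry the actual content of the theorem are both asserted rather than proved. (i) For even $k$ the counting argument forces nothing, since $k\cdot(k/2)>\binom{k}{2}$; your ``separate parity argument'' is named but never given, and in the even case the extremal structure is genuinely looser (in a regular $2\ell$-gon the parallel classes have alternating sizes $\ell$ and $\ell-1$), so the odd-case rigidity argument does not transfer. (ii) The decisive bootstrap --- that the extremal circular sequence forces convex position, that the parallelism pattern is exactly $p_ip_j\parallel p_{i'}p_{j'}$ iff $i+j\equiv i'+j'$ cyclically, and that a convex polygon with this full parallelism pattern is affinely regular --- is introduced with ``one shows'' and ``I would invoke the classical characterization,'' with no argument for any of the three claims and no ruling out of non-convex realizations of the same abstract swap pattern. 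That bootstrap is precisely where Jamison's paper does its work; a proof that treats it as a citation-shaped hole is a plan for a proof, not a proof.

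If you want to complete this program, the two concrete obligations are: a self-contained derivation (for both parities of $k$) that the extremal sequence determines the parallel classes to be the cyclic classes $\{p_ip_j : i+j\equiv c \pmod k\}$ for the convex-position labelling, including a proof of convex position itself; and an actual proof of the affine-rigidity lemma, e.g.\ by normalizing three consecutive vertices with an affine map and showing the parallelism relations then determine every remaining vertex to coincide with the corresponding vertex of the regular $k$-gon (a linear-recurrence argument of the form $v_{i+1}+v_{i-1}=\lambda v_i + c$ can be made to work here). Absent those, the attempt is a correct outline of a plausible Ungar-style route, but it has a genuine gap at exactly the step the theorem is about.
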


We can now proceed with the proof of Lemma~\ref{lem-realizationUnique}.

\begin{proof}[Proof of Lemma~\ref{lem-realizationUnique}]
Assume that a set $P_0$ of points and a set $\mathcal{L}_0$ of lines in the plane satisfy $H_k=(P_0,\mathcal{L}_0)$.
Let $Q$ be the set containing points from $P_0$ that correspond to $v_1,\dots,v_k$ in $H_k$.
The set $Q$ is in general position, as $\mathcal{L}_0$ contains all lines determined by pairs of points from $Q$ and each of them contains exactly two points of $Q$.
Let $h$ be a projective transformation that sends the line from~$\mathcal{L}_0$ corresponding to $L_\infty$ in $H_k$ to infinity. 
Then, the points from $h(Q)$ are still in general position and determine exactly $k$ slopes as every line determined by points from $h(Q)$ contains one of the points $t_i$, and each point $t_i$ lies on some line determined by points from $h(Q)$.

By Theorem~\ref{thm-directions}, there is an affine transformation $g$ such that $g(h(Q))$ is a regular $k$-gon $R$.
Let $R'$ be the extended regular $k$-gon obtained from $R$.
By considering the projective transformation $f = g \circ h$ and using the fact that projective transformations preserve point-line incidences, we see that $P_0 = f(R')$ and thus $P_0$ is an image of an extended regular $k$-gon via a projective transformation.
\end{proof}

It is well-known that regular $k$-gons can have all vertices with integer coordinates if and only if $k = 4$.
Here, we prove that, for some values of~$k$, images of extended regular $k$-gons via projective transformation cannot be embedded in subsets of the plane formed by points with coordinates of small degree.

For an algebraic number $x$, we use $deg(x)$ to denote the degree of $x$ over $\mathbb{Q}$.
We note that for $r \in \mathbb{Q}$ the number $\cos(r \pi)$ is algebraic; see~\cite{pss24}, for example.

\begin{lemma}
\label{lem-embedding}
There is an absolute constant $c \geq 1$ such that, for all integers $d \geq 1$ and $k \geq 5$ satisfying
\begin{equation}
\label{eq-condition}
deg\left(\left(1+2\cos\left(\frac{2\pi}{k}\right)\right)^2 \right) > d^c
\end{equation}
and for every projective transformation $f$, if $R'$ is an extended regular $k$-gon, then $f(R')$ is not a subset of $V_d$.
\end{lemma}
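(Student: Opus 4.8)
The plan is to exploit the fact that a projective transformation preserves the cross-ratio of four collinear points, together with the observation that among the $2k$ points of an extended regular $k$-gon there are four collinear points---namely four of the points $t_i$ at infinity---whose cross-ratio equals exactly $\bigl(1+2\cos(2\pi/k)\bigr)^2$. If $f(R')$ were contained in $V_d$, the images of these four points would be finite points with coordinates of degree at most $d$, which would force their cross-ratio, and hence $\bigl(1+2\cos(2\pi/k)\bigr)^2$, to be an algebraic number whose degree is bounded by a fixed power of $d$, contradicting~\eqref{eq-condition}.

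First I would locate the four points. By Lemma~\ref{lem-realization} the points $t_1,\dots,t_k$ of $R'$ are the points at infinity corresponding to the $k$ slopes determined by a regular $k$-gon. A direct computation shows that the line through the vertices $v_a$ and $v_b$ of a regular $k$-gon has slope $-\cot(\pi(a+b)/k)$, so the $k$ directions are equally spaced with angular gap $\pi/k$; write the direction of $t_m$ as the angle $\phi_m=\pi m/k+\pi/2$ for $m\in\{0,\dots,k-1\}$. Using the standard formula for the cross-ratio of four directions,
\[
(\phi_1,\phi_2;\phi_3,\phi_4)=\frac{\sin(\phi_1-\phi_3)\,\sin(\phi_2-\phi_4)}{\sin(\phi_1-\phi_4)\,\sin(\phi_2-\phi_3)},
\]
which depends only on the differences $\phi_i-\phi_j=\pi(m_i-m_j)/k$ and so remains valid even when one direction is vertical, I would take the four points $t_m$ with $m\in\{3,1,0,4\}$. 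These four indices are pairwise distinct modulo $k$ precisely because $k\geq 5$, and the resulting cross-ratio equals
\[
\frac{\sin^2(3\pi/k)}{\sin^2(\pi/k)}=\left(\frac{\sin(3\pi/k)}{\sin(\pi/k)}\right)^{\!2}=\bigl(1+2\cos(2\pi/k)\bigr)^2,
\]
where the last equality is the identity $\sin(3x)/\sin(x)=1+2\cos(2x)$ applied with $x=\pi/k$.

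Next I would run the contradiction. Suppose $f(R')\subseteq V_d$. Since $f$ is projective, the four chosen points at infinity lie on a common line (the line at infinity), so their images $p_1,p_2,p_3,p_4$ lie on a common line and inherit the same cross-ratio $\bigl(1+2\cos(2\pi/k)\bigr)^2$. As $f(R')\subseteq V_d\subseteq\mathbb{R}^2$, each $p_i=(x_i,y_i)$ is a finite point whose coordinates are algebraic of degree at most $d$. The cross-ratio of four collinear points is a rational function of their coordinates, hence it lies in the field $K=\mathbb{Q}(x_1,y_1,\dots,x_4,y_4)$, which satisfies $[K:\mathbb{Q}]\leq d^{8}$ because it is generated by eight numbers each of degree at most $d$. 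Therefore $\deg\bigl((1+2\cos(2\pi/k))^2\bigr)\leq d^{8}$, contradicting~\eqref{eq-condition} as soon as $c\geq 8$; thus the absolute constant $c=8$ works.

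The main obstacle is the cross-ratio identification in the middle step: one must choose four slope directions whose sine cross-ratio collapses to the square of $\sin(3\pi/k)/\sin(\pi/k)$, and verify that these four directions are genuinely distinct, which is exactly where the hypothesis $k\geq 5$ enters. The concluding degree estimate is then routine field theory, and the only care needed is to keep the exponent (here $8$) absolute, that is, independent of $k$, so that~\eqref{eq-condition} yields the contradiction uniformly.
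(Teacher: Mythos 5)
Your proof is correct, but it takes a genuinely different route from the paper. The paper introduces an auxiliary point $w$ \emph{outside} the configuration (the intersection of $\overline{v_1v_2}$ and $\overline{v_{k-1}v_k}$), computes $(t_1,w;v_1,v_2)=1+2\cos\left(\frac{2\pi}{k}\right)$ via the Law of sines, and then has to do two extra pieces of bookkeeping: bounding the degree of the coordinates of $f(w)$ (since $w$ is constructed from, but not contained in, $f(R')$) and squaring the cross-ratio, because the paper's distance-based formula for the cross-ratio brings in square roots. You avoid both complications by using four of the slope points $t_m$ themselves, which are already vertices of $R'$ and are collinear on the line at infinity; your index choice $\{0,1,3,4\}$ (distinct mod $k$ exactly when $k\geq 5$, so the hypothesis enters in the same place) gives cross-ratio $\sin^2(3\pi/k)/\sin^2(\pi/k)=\left(1+2\cos\left(\frac{2\pi}{k}\right)\right)^2$ directly via $\sin(3x)=\sin(x)(1+2\cos(2x))$ --- i.e., the squared quantity of condition~\eqref{eq-condition} appears natively rather than by squaring. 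Moreover, since the cross-ratio of four collinear \emph{finite} points is a rational function of an affine parameter along the line (coordinate differences, no distances), it lies in $\mathbb{Q}(x_1,y_1,\dots,x_4,y_4)$ and your degree bound $d^8$ follows cleanly, yielding the explicit absolute constant $c=8$ (in fact $c=4$ would do, using only the four affine parameters), whereas the paper leaves its constants implicit. The one point to write out carefully is the sine formula for the cross-ratio of points at infinity, which you correctly flag: it follows from $\tan\phi_i-\tan\phi_j=\sin(\phi_i-\phi_j)/(\cos\phi_i\cos\phi_j)$ with the cosine factors cancelling, and extends to a vertical direction by homogeneous coordinates. Both proofs hinge on the same two pillars --- projective invariance of the cross-ratio and elementary field-degree arithmetic --- but yours is self-contained within the configuration and arguably cleaner.
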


For $d=1$, we later show that the condition~\eqref{eq-condition} on $k$ in Lemma~\ref{lem-embedding} is satisfied for any $k \notin \{3,4,6\}$.
For example, by setting $k=5$, we obtain $\left(1+2\cos\left(\frac{2\pi}{5}\right)\right)^2 = \frac{3+\sqrt{5}}{2} \notin \mathbb{Q}$.
On the other hand, choosing $k=6$ gives $\left(1+2\cos\left(\frac{\pi}{3}\right)\right)^2 = 4 \in \mathbb{Q}$.
In fact, there is a projective transformation $f$ such that $f(R') \subseteq \mathbb{Z}^2$ if $k \in \{3,4,6\}$; see the proof of Theorem~\ref{thm-counterexample}.

Before proving Lemma~\ref{lem-embedding}, we first state some auxiliary definitions.
If $p$ and $q$ are two points in the real projective plane, then we denote their distance by $|pq|$. 
If $a$, $b$, $c$ are three distinct points on a line, then their \emph{ratio} is $(a, b; c) = \xi\frac{|ac|}{|bc|}$ where $\xi=1$ if the rays from $a$ to $c$ and from $b$ to $c$ point in the same direction, and $\xi=-1$ otherwise.
For four distinct collinear points $a,b,c,d$, their \emph{cross-ratio} is defined as
$(a,b;c,d) = \frac{(a,b;c)}{(a,b;d)}$. 
It is well-known that the cross-ratio is invariant under projective transformations.
If $a,b,c,d$ appear on the line in this order along the orientation of the line, then
\[(a,b;c,d) = \frac{|ac||bd|}{|ad||bc|}.\]
If one of the four points is the line's point at infinity, then the two distances involving that point are removed from the formula.

\begin{proof}[Proof of Lemma~\ref{lem-embedding}]
For $d \geq 1$, let $R'$ be an extended regular $k$-gon with $k \geq 5$, and let $f$ be a projective transformation.
We may assume that $f(R') \subseteq \mathbb{R}^2$ as otherwise some points of $f(R')$ are mapped to the line in infinity and are not contained in~$V_d$.
As before, we use $R$ to denote the regular $k$-gon on vertices $v_1,\dots,v_k$ that is a subset of $R'$.
We let $w$ be the intersection point of the lines $\overline{v_1v_2}$ and $\overline{v_{k-1},v_k}$.
Since $k \geq 5$, the point $w$ is not one of the points from $\{v_1,\dots,v_k\}$ nor does it lie in infinity.

Let $s$ be the length of each side of $R$, that is, $s=|v_iv_{i+1}|$ for every $i \in [k]$.
We also use $r$ to denote the distance $|wv_1|$.
The points $w$, $v_1$, and $v_k$ span an isosceles triangle with sides of lengths $r$, $r$, and $s$.
Now, the angles $wv_1v_k$ and $wv_kv_1$ are equal to $(2\pi - 2\frac{(k-2)\pi}{k})/2=2\pi/k$, since the internal angle of $R$ is $\frac{(k-2)\pi}{k}$.
It follows that the angle $v_1wv_k$ equals $\pi - 2 \cdot 2\pi/k = \frac{(k-4)\pi}{k}$.
By the \emph{Law of sines}, we get
\begin{equation}
\label{eq-ratio}
\frac{s}{r} = \frac{\sin\left(\frac{(k-4)\pi}{k}\right)}{\sin\left(\frac{2\pi}{k}\right)} = \frac{\sin\left(\frac{4\pi}{k}\right)}{\sin\left(\frac{2\pi}{k}\right)} = 2\cos\left(\frac{2\pi}{k}\right),
\end{equation}
where we also used the \emph{Double angle identity} $\sin(2x) = 2\sin(x)\cos(x)$.

To compute the cross-ratio, note that, by~\eqref{eq-ratio},
\begin{equation}
\label{eq-crossRatio}
(t_1,w;v_1,v_2) = \frac{|t_1v_1||wv_2|}{|t_1v_2||wv_1|} = \frac{|wv_2|}{|wv_1|} = \frac{r+s}{r} = 1+2\cos\left(\frac{2\pi}{k}\right),
\end{equation}
where we used the fact that $t_1$ is the line's point at infinity.

Now, suppose for contradiction that $R'$ is an extended regular $k$-gon satisfying $f(R') \subseteq V_d$ and $deg\left(\left(1+2\cos\left(\frac{2\pi}{k}\right)\right)^2\right) > d^c$.

We show that the point $w$ has coordinates of degree at most $d^{c'}$ for some absolute constant $c' \geq 1$.
The point $w$ is the intersection point of two lines $\overline{v_1v_2}$ and $\overline{v_{k-1},v_k}$. 
Thus, the coordinates of $w$ can be expressed in terms of the coordinates of the points $v_1,v_2,v_{k-1},v_k$ using a fixed number of additions, multiplications, and divisions.
If $\alpha$ and $\beta$ are two algebraic numbers of degrees $m$ and $n$, respectively, then the degree of $\alpha + \beta$, $\alpha \cdot \beta$, and $\alpha^{-1}$ are at most $mn$, $mn$, and $m$, respectively.
Since $v_1,v_2,v_{k-1},v_k \in V_d$, the coordinates of all points $v_1,v_2,v_{k-1},v_k$ have degree at most $d$.
It follows that the degrees of the coordinates of $w$ are at most $d^{c'}$ for some absolute constant $c' \geq 1$.

Using the bounds on degrees of numbers obtained by additions and multiplications and the facts $f(R') \subseteq V_d$ and $w \in V_{d^c}$, the squares of all distances between two points from $f(R' \cup \{w\})$ are numbers of degree at most $d^c$ for some absolute constant $c \geq 1$.
Thus, the number $(f(t_1),f(w);f(v_1),f(v_2))^2$ has degree at most $d^c$.
Since projective transformations preserve cross-ratios, we have
\begin{equation}
\label{eq-preserving}
(t_1,w;v_1,v_2) = (f(t_1),f(w);f(v_1),f(v_2)).
\end{equation}
On the other hand, the equation~\eqref{eq-crossRatio} 
gives
\[
deg\left((t_1,w;v_1,v_2)^2\right) = deg\left(\left(1+2\cos\left(\frac{2\pi}{k}\right)\right)^2\right) > d^c.
\]
By combining this with~\eqref{eq-preserving}, we obtain that $(f(t_1),f(w);f(v_1),f(v_2))^2$ has degree larger than $d^c$, a contradiction.
\end{proof}

We now characterize values of $k \geq 5$ that satisfy~\eqref{eq-condition} for $d=1$ by showing that $k=6$ is the only exception violating~\eqref{eq-condition}.
To do so, we use the following classical result called \emph{Niven's theorem}~\cite{niven56}; see also~\cite{pss24}.

\begin{theorem}[Niven's theorem~\cite{niven56}]
\label{thm-niven}
If $r$ and $\cos(r\pi)$ are both rational, then \[\cos(r\pi)\in\left\{0,\pm 1,\frac{\pm 1}{2}\right\}.\]
\end{theorem}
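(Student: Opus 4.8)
The plan is to prove Niven's theorem by the classical algebraic-integer argument: I would show that for \emph{any} rational $r$ the number $2\cos(r\pi)$ is an algebraic integer, and then use the extra hypothesis that $\cos(r\pi)$ is rational to force $2\cos(r\pi)$ to be an ordinary integer lying in $[-2,2]$.

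First I would establish the integrality of $2\cos(r\pi)$. Write $\theta = r\pi$ and set $a_n = 2\cos(n\theta)$. The product-to-sum identity $2\cos\theta\cdot 2\cos(n\theta) = 2\cos((n+1)\theta) + 2\cos((n-1)\theta)$ yields the recurrence $a_{n+1} = a_1 a_n - a_{n-1}$ with $a_0 = 2$ and $a_1 = 2\cos\theta$. A straightforward induction then gives $a_n = P_n(a_1)$, where each $P_n \in \mathbb{Z}[x]$ is monic of degree $n$ (for instance $P_1(x)=x$ and $P_2(x)=x^2-2$), since in the step $P_{n+1} = xP_n - P_{n-1}$ the term $xP_n$ is monic of degree $n+1$ and the subtracted polynomial has strictly smaller degree. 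Now write $r = a/b$ in lowest terms; then $2b\theta = 2a\pi$, so $a_{2b} = 2\cos(2a\pi) = 2$. Hence $x = 2\cos(r\pi)$ is a root of the monic integer polynomial $P_{2b}(x) - 2$, which shows that $2\cos(r\pi)$ is an algebraic integer.

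Next I would invoke the second hypothesis. Since $\cos(r\pi)$ is rational, so is $2\cos(r\pi)$; but a rational number that is also a root of a monic integer polynomial must be a rational integer (this is the integrality half of the rational root theorem: a reduced fraction $p/q$ satisfying such a polynomial forces $q=1$). Therefore $2\cos(r\pi) \in \mathbb{Z}$. Finally, from $|\cos(r\pi)| \le 1$ we get $2\cos(r\pi) \in \{-2,-1,0,1,2\}$, and hence $\cos(r\pi) \in \left\{0,\pm 1,\tfrac{\pm 1}{2}\right\}$, as claimed.

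The only substantive ingredient is the first paragraph, namely producing the monic integer polynomial vanishing at $2\cos(r\pi)$; this is exactly where the rationality of $r$ (through the root-of-unity relation $a_{2b}=2$) is used, and it is the step I expect to carry the weight of the argument. Everything afterwards is the elementary observation that rational algebraic integers are integers together with the trivial bound $|\cos| \le 1$.
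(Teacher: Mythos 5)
Your proof is correct, but there is nothing in the paper to compare it against: the paper states Niven's theorem as a quoted classical result (cited to Niven~\cite{niven56}, see also~\cite{pss24}) and gives no proof of its own. Your argument is the standard algebraic-integer proof and it is complete: the recurrence $a_{n+1}=a_1a_n-a_{n-1}$ with $a_n=2\cos(n\theta)$ correctly yields monic polynomials $P_n\in\mathbb{Z}[x]$ with $a_n=P_n(a_1)$ via $P_{n+1}=xP_n-P_{n-1}$, the relation $a_{2b}=2$ for $r=a/b$ exhibits $2\cos(r\pi)$ as a root of the monic integer polynomial $P_{2b}(x)-2$, and the rational root theorem plus $\lvert 2\cos(r\pi)\rvert\le 2$ finishes the job. (Equivalently, one can observe $2\cos(r\pi)=\zeta+\zeta^{-1}$ for a root of unity $\zeta$, a sum of algebraic integers; your polynomial construction is a self-contained substitute for that.) One cosmetic remark: $P_0(x)=2$ is not monic, so the claim ``each $P_n$ is monic of degree $n$'' should be read for $n\ge 1$; since your induction is anchored at $P_1(x)=x$ and $P_2(x)=x^2-2$ and the inductive step only needs $\deg P_{n-1}<n+1$, this does not affect the argument.
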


In particular, if $r\pi \in (0,\pi/2)$ and both $r$ and $\cos(r\pi)$ are rational, then Niven's theorem implies $r =\frac{1}{3}$.
We also use the following extension of Niven’s theorem to quadratic number fields, which can be found in the paper by Panraksa, Samart, and Sriwongsa~\cite{pss24}, for example.

\begin{theorem}[\cite{pss24}]
\label{thm-nivenQuadratic}
Let  $r\in \mathbb{Q}$.
If  $\cos(r\pi)$ is a quadratic irrational, then \[\cos(r\pi) \in\left\{\frac{\pm \sqrt{2}}{2},\frac{\pm \sqrt{3}}{2},\frac{\pm 1 \pm \sqrt{5}}{4}\right\}.\]
\end{theorem}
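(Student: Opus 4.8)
The plan is to use the arithmetic of roots of unity. Since $r \in \mathbb{Q}$, the number $\zeta = e^{i r\pi}$ is a root of unity, so $\alpha := 2\cos(r\pi) = \zeta + \zeta^{-1}$ is a sum of two roots of unity and hence a real algebraic integer. Writing $\zeta$ as a primitive $N$-th root of unity, say $\zeta = e^{2\pi i a/N}$ with $\gcd(a,N) = 1$, we get $\alpha = 2\cos(2\pi a/N)$. As $\cos(r\pi)$ is assumed irrational we must have $N \geq 3$, and then $\alpha$ is a Galois conjugate of the standard generator $2\cos(2\pi/N)$ of the maximal real subfield $\mathbb{Q}(\zeta_N)^{+}$ of the cyclotomic field $\mathbb{Q}(\zeta_N)$. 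In particular $\mathbb{Q}(\cos(r\pi)) = \mathbb{Q}(\alpha)$ has degree $[\mathbb{Q}(\zeta_N)^{+}:\mathbb{Q}] = \varphi(N)/2$ over $\mathbb{Q}$, where $\varphi$ is Euler's totient.

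With this identification the hypothesis that $\cos(r\pi)$ is a quadratic irrational becomes the single equation $\varphi(N)/2 = 2$, that is $\varphi(N) = 4$. Solving $\varphi(N)=4$ is a routine finite check and gives exactly $N \in \{5,8,10,12\}$. It then remains to record, for each such $N$, the possible values $\cos(2\pi a/N)$ as $a$ runs over the units modulo $N$: a direct computation gives $\frac{-1\pm\sqrt5}{4}$ for $N=5$, $\pm\frac{\sqrt2}{2}$ for $N=8$, $\frac{1\pm\sqrt5}{4}$ for $N=10$, and $\pm\frac{\sqrt3}{2}$ for $N=12$. Their union is precisely the set displayed in the statement.

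An essentially equivalent but more self-contained route avoids the exact degree formula. One argues that, being a quadratic algebraic integer, $\alpha$ has a minimal polynomial $f(x)=x^2+bx+c$ with $b,c \in \mathbb{Z}$; moreover every Galois conjugate of $\alpha = \zeta+\zeta^{-1}$ is of the form $\zeta^{j}+\zeta^{-j} = 2\cos(\text{a rational multiple of }\pi)$ and therefore lies in $[-2,2]$. Since the two roots are irrational they lie in the open interval $(-2,2)$, which forces $f(2)>0$, $f(-2)>0$, $-2 < -b/2 < 2$, and $b^2-4c$ to be a positive non-square. These inequalities leave only finitely many pairs $(b,c)$, and enumerating them reproduces exactly the values in the statement.

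The crux of the argument, and the step I expect to be the main obstacle, is the very first one: establishing that all Galois conjugates of $\alpha = 2\cos(r\pi)$ are again of the form $2\cos(\cdot)$ with a rational argument, equivalently that $\alpha$ generates a real cyclotomic field on which the Galois group permutes such cosine values. This single fact is what both confines the conjugates to $[-2,2]$ in the elementary enumeration and pins the degree of $\alpha$ to $\varphi(N)/2$ in the cyclotomic argument; once it is in place, the remainder is bounded, mechanical computation.
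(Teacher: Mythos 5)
Your proposal is correct. One thing to note up front: the paper does not prove this statement at all --- it is imported verbatim from Panraksa, Samart, and Sriwongsa~\cite{pss24} as a black box, so there is no in-paper proof to compare against. Your first route is the standard one and is sound: writing $2\cos(r\pi)=\zeta+\zeta^{-1}$ for a primitive $N$-th root of unity $\zeta$, the degree of $\cos(2\pi a/N)$ over $\mathbb{Q}$ is $\varphi(N)/2$ for $N>2$, so the quadratic-irrational hypothesis forces $\varphi(N)=4$, i.e.\ $N\in\{5,8,10,12\}$, and the enumeration of $\cos(2\pi a/N)$ over units $a$ modulo $N$ yields exactly the displayed set. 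It is worth observing that the degree formula you invoke is precisely Lehmer's theorem, which the paper quotes separately as Theorem~\ref{thm-lehmer}; so your argument in effect derives the quoted quadratic case from another result already available in the paper, which is a tidy economy. The step you flag as the crux --- that all Galois conjugates of $\zeta+\zeta^{-1}$ have the form $\zeta^{j}+\zeta^{-j}$ --- is indeed the load-bearing fact, but it is standard cyclotomic theory: $\alpha\in\mathbb{Q}(\zeta_N)$, the Galois group of $\mathbb{Q}(\zeta_N)/\mathbb{Q}$ consists of the maps $\zeta\mapsto\zeta^{j}$ with $\gcd(j,N)=1$, and these restrict to the real subfield, so no obstacle remains there. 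Your second, Kronecker-style route also checks out: since $\alpha=2\cos(r\pi)$ is a quadratic algebraic integer all of whose conjugates lie in $(-2,2)$, its minimal polynomial $x^2+bx+c$ must satisfy $|b|\leq 3$, $-4+2|b|<c<b^2/4$, and have non-square positive discriminant, which leaves only $(b,c)\in\{(0,-2),(0,-3),(1,-1),(-1,-1)\}$, reproducing the four quadratic fields $\mathbb{Q}(\sqrt{2})$, $\mathbb{Q}(\sqrt{3})$, $\mathbb{Q}(\sqrt{5})$ values in the statement; note that for the containment direction asserted by the theorem you do not even need every candidate polynomial to be realized by a cosine, so the enumeration closes the proof as written.
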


Now, if $r\pi \in (0,\pi/2)$, $r$ is rational, and $\cos(r\pi)$ is a quadratic irrational, then Theorem~\ref{thm-nivenQuadratic} implies $r \in \left\{\frac{1}{4},\frac{1}{6},\frac{1}{5},\frac{2}{5}\right\}$.

Lehmer~\cite{lehmer33} proved the following general result; see also~\cite{pss24}.

\begin{theorem}[\cite{lehmer33}]
\label{thm-lehmer}
Let $m, n \in \mathbb{Z}$, with $n > 2$, be relatively prime.
Then $\cos(2\pi m/n)$ is an algebraic
number of degree $\varphi(n)/2$, where $\varphi(n)$ is the Euler’s totient function.
\end{theorem}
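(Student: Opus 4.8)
The plan is to interpret $\cos(2\pi m/n)$ through the theory of cyclotomic fields. Set $\zeta = e^{2\pi i m/n}$. Since $\gcd(m,n)=1$, the number $\zeta$ is a \emph{primitive} $n$-th root of unity, so it is a root of the $n$-th cyclotomic polynomial $\Phi_n$. The crucial input is that $\Phi_n$ is irreducible over $\mathbb{Q}$ and has degree $\varphi(n)$, which yields $[\mathbb{Q}(\zeta):\mathbb{Q}] = \varphi(n)$. The strategy is then to descend from $\mathbb{Q}(\zeta)$ to the real subfield generated by $\cos(2\pi m/n)$ and track the index.

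Next I would relate $\cos(2\pi m/n)$ to $\zeta$ via $\cos(2\pi m/n) = \tfrac{1}{2}(\zeta + \zeta^{-1})$, which shows $\mathbb{Q}(\cos(2\pi m/n)) \subseteq \mathbb{Q}(\zeta)$; in fact $\mathbb{Q}(\cos(2\pi m/n))$ is the maximal real subfield of $\mathbb{Q}(\zeta)$. The key step is to show that the extension $\mathbb{Q}(\zeta)/\mathbb{Q}(\cos(2\pi m/n))$ has degree exactly $2$. To this end, observe that $\zeta$ is a root of the quadratic $x^2 - 2\cos(2\pi m/n)\,x + 1$, whose coefficients lie in $\mathbb{Q}(\cos(2\pi m/n))$. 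Because $n > 2$, the number $\zeta$ is non-real, so it cannot lie in the real field $\mathbb{Q}(\cos(2\pi m/n))$; hence this quadratic is irreducible over that field and the extension degree is $2$. By the tower law,
\[
[\mathbb{Q}(\cos(2\pi m/n)):\mathbb{Q}] = \frac{[\mathbb{Q}(\zeta):\mathbb{Q}]}{[\mathbb{Q}(\zeta):\mathbb{Q}(\cos(2\pi m/n))]} = \frac{\varphi(n)}{2}.
\]
One should also note that $\varphi(n)$ is even whenever $n > 2$, so that $\varphi(n)/2$ is a genuine integer; this follows from the multiplicativity of $\varphi$ together with the fact that $\varphi(p^a)$ is even for odd primes $p$ and $\varphi(2^a)$ is even for $a \geq 2$.

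The main obstacle is the irreducibility of the cyclotomic polynomial $\Phi_n$ over $\mathbb{Q}$, which underlies the equality $[\mathbb{Q}(\zeta):\mathbb{Q}] = \varphi(n)$. This is a classical but genuinely nontrivial theorem (going back to Gauss), typically established by a reduction-modulo-$p$ argument showing that all primitive $n$-th roots of unity share a single minimal polynomial. Everything else in the argument is elementary field theory: once irreducibility is granted, the degree-$2$ descent to the real subfield and the tower law finish the proof at once.
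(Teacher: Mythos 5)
Your argument is correct, but note first that the paper does not prove this statement at all: Theorem~\ref{thm-lehmer} is imported verbatim from Lehmer's 1933 paper as a black box (alongside Niven's theorem and its quadratic extension), so there is no internal proof to compare against. Judged on its own, your proof is the standard field-theoretic one and is complete modulo the one ingredient you explicitly flag, namely the irreducibility of $\Phi_n$ over $\mathbb{Q}$, which is a legitimate classical citation. All the steps check out: $\gcd(m,n)=1$ makes $\zeta = e^{2\pi i m/n}$ a primitive $n$-th root of unity; $\zeta$ is a root of $x^2 - 2\cos(2\pi m/n)\,x + 1$ over $\mathbb{Q}(\cos(2\pi m/n))$; the hypothesis $n>2$ is used exactly where it must be, since $\zeta \in \mathbb{R}$ would force $\zeta = \pm 1$ and hence $n \mid 2$, so the quadratic is irreducible over the real field $\mathbb{Q}(\cos(2\pi m/n))$ and the tower law gives $\varphi(n)/2$; and your parity remark confirms this is an integer. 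For what it is worth, Lehmer's original derivation is the same descent carried out at the level of polynomials rather than fields: he exhibits a polynomial $\psi_n$ of degree $\varphi(n)/2$ with $\Phi_n(z) = z^{\varphi(n)/2}\,\psi_n(z + z^{-1})$ and shows $\psi_n$ is the minimal polynomial of $2\cos(2\pi m/n)$, which has the small advantage of producing the minimal polynomial explicitly; your Galois-theoretic version is cleaner but nonconstructive. Either way, the proposal is a valid proof of the stated theorem.
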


We now characterize the values of $k$ that satisfy~\eqref{eq-condition} for $d=1$.

\begin{lemma}
\label{lem-classification}
For every integer $k \geq 5$ with $k \neq 6$, we have $\left(1+2\cos\left(\frac{2\pi}{k}\right)\right)^2 \notin \mathbb{Q}$.
\end{lemma}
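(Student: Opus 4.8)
The plan is to set $\alpha = \cos\left(\frac{2\pi}{k}\right)$ and analyze when $(1+2\alpha)^2$ can be rational. Expanding, we have $(1+2\alpha)^2 = 1 + 4\alpha + 4\alpha^2 = 3 + 4\alpha + 2(2\alpha^2 - 1) + (-1) = \ldots$; more cleanly, using $2\alpha^2 = 1 + \cos\left(\frac{4\pi}{k}\right)$ (the double-angle identity for cosine), I would rewrite $(1+2\alpha)^2 = 1 + 4\alpha + 4\alpha^2 = 3 + 4\cos\left(\frac{2\pi}{k}\right) + 2\cos\left(\frac{4\pi}{k}\right)$. The key observation is that $(1+2\alpha)^2 \in \mathbb{Q}$ forces $\alpha$ itself to be an algebraic number of degree at most $2$: indeed, $1+2\alpha$ is a square root of a rational, hence has degree at most $2$ over $\mathbb{Q}$, and so does $\alpha$. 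This reduces the problem to a finite case analysis governed by the extensions of Niven's theorem already stated.

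I would split into two cases according to the degree of $\alpha$. First, suppose $\alpha = \cos\left(\frac{2\pi}{k}\right) \in \mathbb{Q}$. Since $k \geq 5$ we have $\frac{2\pi}{k} \in (0, \pi/2)$ for $k \geq 5$ (more precisely $\frac{2\pi}{k} \le \frac{2\pi}{5} < \pi/2$ fails for small $k$, so I must be careful — for $k=5$, $\frac{2\pi}{5}$ slightly exceeds $\pi/2$). To handle the range cleanly I would instead apply Niven's theorem (Theorem~\ref{thm-niven}) directly: with $r = 2/k$ rational and $\cos(r\pi)$ rational, we get $\cos\left(\frac{2\pi}{k}\right) \in \{0, \pm 1, \pm\frac{1}{2}\}$, and checking which $k \geq 5$ realize these values shows that only $k = 6$ (giving $\alpha = \frac{1}{2}$, whence $(1+2\alpha)^2 = 4 \in \mathbb{Q}$) occurs, while $\alpha \in \{0, \pm 1, -\frac{1}{2}\}$ correspond to $k \in \{4, 1, 2, 3\}$, none of which satisfy $k \geq 5$ except the excluded $k=6$.

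Second, suppose $\alpha$ is a quadratic irrational. Then by Theorem~\ref{thm-nivenQuadratic} applied with $r = 2/k$, we have $\cos\left(\frac{2\pi}{k}\right) \in \left\{\frac{\pm\sqrt{2}}{2}, \frac{\pm\sqrt{3}}{2}, \frac{\pm 1 \pm \sqrt{5}}{4}\right\}$, which corresponds to $k \in \{5, 8, 10, 12\}$ (matching the values $r \in \{1/4, 1/6, 1/5, 2/5\}$ scaled appropriately). For each such candidate I would compute $(1+2\alpha)^2$ explicitly and verify it remains irrational: for instance, when $\alpha = \frac{-1+\sqrt{5}}{4}$ (the $k=5$ case), $(1+2\alpha)^2 = \left(\frac{1+\sqrt{5}}{2}\right)^2 = \frac{3+\sqrt{5}}{2} \notin \mathbb{Q}$, exactly as the excerpt notes; the remaining cases are handled by the same direct substitution. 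Finally, for all $k$ not captured above — i.e.\ those for which $\alpha$ has degree at least $3$ — Lehmer's theorem (Theorem~\ref{thm-lehmer}) shows $\deg(\alpha) = \varphi(k)/2 \geq 3$, so $\alpha$ cannot lie in any quadratic field and $(1+2\alpha)^2$ is automatically irrational (a rational value would force $\alpha$ into a field of degree $\leq 2$).

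I expect the main obstacle to be the bookkeeping in matching each value of $k$ to the correct entry of the Niven-type classifications and confirming the degree computation via Lehmer's theorem covers every remaining $k$; the algebra itself is routine, but one must ensure that the three regimes (degree $1$, degree $2$, degree $\geq 3$) partition all $k \geq 5$ with $k \neq 6$ and that $k = 6$ is genuinely the unique exception.
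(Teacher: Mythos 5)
Your proposal is correct and takes essentially the same route as the paper: rationality of $(1+2\alpha)^2$ forces $\alpha$ to satisfy a quadratic with rational coefficients, hence $\deg(\alpha) \le 2$, after which Niven's theorem (Theorem~\ref{thm-niven}) and Theorem~\ref{thm-nivenQuadratic} reduce everything to the finite check $k \in \{5,8,10,12\}$ with $k=6$ as the lone rational case, exactly as the paper does. Two harmless remarks: your worry about $k=5$ is unfounded since $\frac{2\pi}{5} = 72^\circ < \frac{\pi}{2}$ (so the range condition holds for all $k \ge 5$), and the appeal to Lehmer's theorem for the degree $\ge 3$ regime is redundant, as you yourself note, because a rational value of $(1+2\alpha)^2$ already confines $\alpha$ to degree at most $2$.
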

\begin{proof}
Let $x=\cos\left(\frac{2\pi}{k}\right)$ and note that $0< \frac{2\pi}{k} < \frac{\pi}{2}$ and $x>0$, since $k \geq 5$.
Assume that $(1+2x)^2 = r$ for some rational number $r$.
We show that $k=6$.
After rewriting, we get $4x^2+4x+1-r=0$, so $x$ is a solution to a quadratic equation with rational coefficients.
In other words, $x$ is either a rational number or a real quadratic irrational.

If $x$ is a rational number, then Niven's theorem (Theorem~\ref{thm-niven}) implies $k = 6$, where we used $0<\frac{2\pi}{k}<\frac{\pi}{2}$.
Similarly, if $x$ is a quadratic irrational, then Theorem~\ref{thm-nivenQuadratic} gives $k \in \{5,8,10,12\}$, where we again used $0<\frac{2\pi}{k}<\frac{\pi}{2}$.
However, for these four values of $k$, the expression $(1+2x)^2$ becomes $\frac{3+\sqrt{5}}{2}$, $3+2\sqrt{2}$, $\frac{7}{2} +\frac{3\sqrt{5}}{2}$, and $4+2\sqrt{3}$, respectively, all of which are irrational numbers.
Thus, we indeed have $k=6$.
\end{proof}

We prove a similar result for numbers with a higher degree.

\begin{lemma}
\label{lem-higherDegree}
For every positive integer $d$, there is a positive integer $k = k(d)$ such that $\left(1+2\cos\left(\frac{2\pi}{k}\right)\right)^2$ has degree larger than $d$.
\end{lemma}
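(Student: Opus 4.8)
The plan is to control the degree of $x=\cos\!\left(\frac{2\pi}{k}\right)$ through Lehmer's theorem and then to argue that passing to the square $(1+2x)^2$ can lose at most a factor of two in the degree; since $\varphi(k)$ is unbounded, a suitable choice of $k$ will then force the square to have arbitrarily large degree.

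First I would invoke Theorem~\ref{thm-lehmer} with $m=1$ and $n=k$. For every $k\geq 3$ we have $\gcd(1,k)=1$ and $k>2$, so $\cos\!\left(\frac{2\pi}{k}\right)$ is algebraic of degree exactly $\varphi(k)/2$. Because $1+2x$ is obtained from $x$ by a rational affine map with nonzero slope, we have $\mathbb{Q}(1+2x)=\mathbb{Q}(x)$ (indeed $x=\frac{(1+2x)-1}{2}$), and hence $\deg(1+2x)=\deg(x)=\varphi(k)/2$.

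Next, writing $\beta=(1+2x)^2$, I would exploit the field tower $\mathbb{Q}\subseteq\mathbb{Q}(\beta)\subseteq\mathbb{Q}(1+2x)$. Since $1+2x$ is a root of the polynomial $X^2-\beta$ with coefficients in $\mathbb{Q}(\beta)$, the extension $\mathbb{Q}(1+2x)/\mathbb{Q}(\beta)$ has degree at most $2$. The multiplicativity of degrees in towers then yields
\[
\deg(\beta)=[\mathbb{Q}(\beta):\mathbb{Q}]\geq\frac{[\mathbb{Q}(1+2x):\mathbb{Q}]}{2}=\frac{\varphi(k)}{4}.
\]
Finally, since $\varphi(k)\to\infty$ as $k\to\infty$ (for instance $\varphi(p)=p-1$ whenever $p$ is prime), I would pick $k$ with $\varphi(k)>4d$; any prime $k>4d+1$ suffices. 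For such $k$ we conclude $\deg(\beta)\geq\varphi(k)/4>d$, which is exactly the claim.

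The only genuine subtlety — the part that needs care rather than bookkeeping — is ruling out a collapse of the degree when squaring $1+2x$: a priori the degree could drop, and in the quadratic-irrational regime of Lemma~\ref{lem-classification} it sometimes does. The tower argument above is precisely what caps this loss at a factor of $2$, so that the growth of $\varphi(k)$ supplied by Lehmer's theorem is not extinguished. Everything else reduces to the standard facts that rational affine maps preserve the generated field and that degrees multiply in towers.
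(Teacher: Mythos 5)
Your proof is correct and follows essentially the same route as the paper's: invoke Lehmer's theorem (Theorem~\ref{thm-lehmer}) to get $\deg\bigl(\cos\bigl(\tfrac{2\pi}{k}\bigr)\bigr)=\varphi(k)/2$ for a prime $k$, then show squaring $1+2x$ can reduce the degree by at most a factor of $2$, and choose the prime large enough. The paper caps the loss by the polynomial-composition argument ($P(y)=0$ forces $Q(x)=P((1+2x)^2)=0$ with $\deg Q\leq 2\deg P$), which is just the elementary rephrasing of your tower $\mathbb{Q}\subseteq\mathbb{Q}(\beta)\subseteq\mathbb{Q}(1+2x)$ with $1+2x$ a root of $X^2-\beta$, so the two arguments coincide in substance.
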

\begin{proof}
We let $p \geq 3$ be a prime number such that $(p+1)/4 > d^c$.
Let $x = \cos\left(\frac{2\pi}{p}\right)$.
By Theorem~\ref{thm-lehmer}, $x$ is an algebraic number of degree $D=\varphi(p)/2=(p-1)/2$.
Suppose, for contradiction, that the degree of $y=(1+2x)^2$ is at most $(D-1)/2$.
That is, there is a polynomial $P$ with rational coefficients and degree at most $(D-1)/2$ such that $P(y)=0$.
Then, however, the polynomial $Q(x) = P((1+2x)^2)$ satisfies $Q(x)=P(y)=0$, while $Q$ has rational coefficients and degree at most $2 \cdot ((D-1)/2)=D-1$.
This contradicts the fact that the degree of $x$ is $D$.

Altogether, we see that the degree of $y$ is at least 
\[\frac{D-1}{2}+1 = \frac{(p-1)/2-1}{2}+1 = \frac{p+1}{4} > d^c,\]
which finishes the proof by setting $k=k(d)=p$.
\end{proof}

Now, we are ready to put everything together and prove Theorem~\ref{thm-counterexample}.

\begin{proof}[Proof of Theorem~\ref{thm-counterexample}]
For a positive integer $d$, let $P$ be a set of $n$ points from $V_d$ and $\mathcal{L}$ a set of $m$ lines in the plane.
We choose an integer $k$ satisfying~\eqref{eq-condition}.
Such an integer exists by Lemma~\ref{lem-higherDegree}.
For $d=1$, Lemma~\ref{lem-classification} implies that all values of $k \geq 5$ besides 6 satisfy this condition.
Let $P_0$ be a set of points and $\mathcal{L}_0$ a set of lines in the plane such that $H_k=G(P_0,\mathcal{L}_0)$.
Such sets exist by Lemma~\ref{lem-realization}.

Suppose for contradiction that $H_k$ is a subgraph of $G(P,\mathcal{L})$.
In particular, we have $P_0 \subseteq P$ as the edges of $G(P,\mathcal{L})$ are oriented from $P$ to $\mathcal{L}$.
By Lemma~\ref{lem-realizationUnique}, there is a projective transformation $f$ and an extended regular $k$-gon $R'$ such that $P_0 = f(R')$.
Since $k$ satisfies~\eqref{eq-condition}, Lemma~\ref{lem-embedding} gives $P_0=f(R') \not\subseteq V_d$.
However, since $P \subseteq V_d$, we obtain a contradiction with $P_0 \subseteq P$.

In the case $d=1$, it remains to show that for $k \in \{3,4,6\}$, there is a projective transformation that maps points of an extended regular $k$-gon to $\mathbb{Z}^2$, completing the characterization.
For vertices $v_1=[\frac{-1}{2}:\frac{\sqrt{3}}{2}:1]$, $v_2=[\frac{1}{2}:\frac{\sqrt{3}}{2}:1]$, $v_3=[1:0:1]$, $v_4=[\frac{1}{2}:\frac{-\sqrt{3}}{2}:1]$, $v_5=[\frac{-1}{2}:\frac{-\sqrt{3}}{2}:1]$, and $v_6=[-1:0:1]$ of a regular $6$-gon $R'$ written in homogeneous coordinates, it suffices to consider the non-singular matrix \[\begin{pmatrix}10 & 20\sqrt{3} & 40\\10 & 20\sqrt{3}& 20 \\ 2 & \sqrt{3} & 0\end{pmatrix}.\]
The corresponding projective transformation $f$ then gives $f(v_1)=[130:90:1]$, $f(v_2)=[30:22:1]$, $f(v_3)=[25:15:1]$, $f(v_4)=[-30:10:1]$, $f(v_5)=[-2:3:1]$, and $f(v_6)=[-15:-5:1]$.
For the points $t_1=[1:0:0]$, $t_2=[-\sqrt{3}:1:0]$, $t_3=[\frac{-1}{\sqrt{3}}:1:0]$, $t_4=[0:1:0]$, $t_5=[\frac{1}{\sqrt{3}}:1:0]$, and $t_6=[\sqrt{3}:1:0]$, we get $f(t_1)=[5:5:1]$, $f(t_2)=[-10:-10:1]$, $f(t_3)=[50:50:0]$, $f(t_4)=[20:20:1]$, $f(t_5)=[14:14:1]$, and $f(t_6)=[10:10:1]$.
Thus, $f(R') \subseteq \mathbb{Z}^2$.

Since points $v_1$, $v_3$, and $v_5$ induce a regular $3$-gon, we also get extended regular $3$-gon as a subset of $\mathbb{Z}^2$.
Similarly, points $v_1$, $v_2$, $v_4$, and $v_6$ induce an affine image of a regular $4$-gon, so by composing this affine map with $f$ we get even an extended regular $4$-gon as a subset of $\mathbb{Z}^2$.
\end{proof}

\section{Proof of Theorem~\ref{thm-grids}}

Let $\mathcal{L}_a$ and $\mathcal{L}_b$ be two sets of $t$ lines in the plane, and let $P_0=\{\ell_a\cap \ell_b : \ell_a\in \mathcal{L}_a, \ell_b\in \mathcal{L}_b\}$ such that $|P_0|=t^2$.
Here, we prove the upper bound 
\[|I(P,\mathcal{L})| \in O(m^{\frac{2t-2}{3t-2}} n^{\frac{2t-1}{3t-2}}+m+n)\] 
for all configurations $(P,\mathcal{L})$, where $|P|=m$ and $|\mathcal{L}|=n$, that do not contain the subconfiguration $(P_0,\mathcal{L}_a\cup \mathcal{L}_b)$.
To do so, we use a new approach based on a variant of the celebrated Crossing lemma.
 
We recall the notion of the crossing number. 
A \emph{drawing} of a simple graph $G$ in the plane is a mapping $f$ that assigns to each vertex of $G$ a distinct point of the plane, and to each edge $uv$ of $G$ a continuous arc connecting $f(u)$ and $f(v)$, without passing through the image of any other vertex. 
The \emph{crossing number} $\textrm{cr}(G)$ of $G$ is the minimum number of edge crossing points in any drawing of~$G$. 
By a well-known result of Ajtai et al.~\cite{ACNS}, the crossing number of $G$ with $n$ vertices and $e$ edges is at least $\Omega(e^3/n^2)$, assuming that $e\in \Omega(n)$. 
By putting further restrictions on the graph, one can obtain improved lower bounds for the crossing number.

In our setting, we use the following variant of the Crossing lemma proved by Pach, Spencer, and T\'{o}th~\cite{pachspencertoth}. 
This is a special variant of their more general result with slightly worse bounds; see Theorem~\ref{thm:crossing_general}.

\begin{theorem}[\cite{pachspencertoth}]
\label{thm:crossing}
Let $G$ be a graph with $n$ vertices and $e\geq 4n$ edges, which does not contain a complete bipartite graph $K_{r,s}$ with $s \geq r$. Then the crossing number of $G$ satisfies \[\textrm{cr}(G)\geq c_{r,s}\frac{e^{3+1/(r-1)}}{n^{2+1/(r-1)}},\] for some constant $c_{r,s}>0$.
\end{theorem}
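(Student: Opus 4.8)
The plan is to quantify, at every scale, how the absence of $K_{r,s}$ constrains the edge distribution of $G$, and then to convert this constraint into crossings via the relationship between crossing number and bisection width. The forbidden subgraph will enter in exactly one place: since $s \ge r$, the K\H{o}v\'{a}ri--S\'{o}s--Tur\'{a}n theorem guarantees that every induced subgraph of $G$ on $m$ vertices, being again $K_{r,s}$-free, spans at most $c_1 m^{2-1/r}$ edges. It is worth noting at the outset that the random-vertex-sampling argument proving the ordinary crossing lemma quoted above (which bootstraps the planar base inequality $\mathrm{cr}(G) \ge e - 3n$) is scale-invariant and cannot be pushed past the exponent $e^3/n^2$; to beat it one needs a mechanism sensitive to the full edge-density profile of $G$ across all scales, and this is precisely what $K_{r,s}$-freeness supplies.

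The two ingredients are (i) the K\H{o}v\'{a}ri--S\'{o}s--Tur\'{a}n caps above, and (ii) the Pach--Shahrokhi--Szegedy inequality $b(H)^2 \le C\bigl(\mathrm{cr}(H) + \sum_{v \in H}\deg_H(v)^2\bigr)$ relating the bisection width $b(H)$ (the minimum number of edges whose removal splits $H$ into two balanced parts) to its crossing number, which follows by placing a dummy vertex at each crossing and invoking the weighted planar separator theorem. I would recursively bisect $G$: split it into two balanced parts by deleting $b(G)$ edges and recurse on each, obtaining a decomposition tree of depth $O(\log n)$ in which every node at level $i$ spans at most $n/2^i$ vertices and every edge of $G$ is deleted at exactly one node. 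Writing $\beta_i$ for the total number of edges deleted at level $i$ and setting $x = \mathrm{cr}(G)$, $D = \sum_v \deg(v)^2$, the Cauchy--Schwarz inequality together with (ii) and the vertex-disjointness of the level-$i$ pieces (so $\sum_H \mathrm{cr}(H) \le x$ and $\sum_H D(H) \le D$) gives $\beta_i \le \sqrt{2^i\,C\,(x + D)}$, while (i) caps the edges still internal to level-$i$ pieces by $c_1 n^{2-1/r}2^{-i(1-1/r)}$.

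These two caps pull in opposite directions: the K\H{o}v\'{a}ri--S\'{o}s--Tur\'{a}n cap forces the internal edge count to decay geometrically in $i$, so a definite fraction of the $e$ edges must already be cut by the critical level $i^\ast$ at which $c_1 n^{2-1/r}2^{-i^\ast(1-1/r)} \approx e$, whereas summing the bisection cap shows that cutting that many edges by level $i^\ast$ costs $x + D \gtrsim e^2\,2^{-i^\ast}$. Substituting $2^{i^\ast} \approx (n^{2-1/r}/e)^{r/(r-1)}$ and simplifying the exponents yields $x + D \ge c_{r,s}\,\frac{e^{3+1/(r-1)}}{n^{2+1/(r-1)}}$, matching the claimed bound. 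The main obstacle is the degree term $D$: one must show it does not dominate, which I would do by using that a $K_{r,s}$-free graph satisfies $\sum_v \binom{\deg v}{r} \le (s-1)\binom{n}{r}$, bounding $\sum_v\deg(v)^2$ and hence forcing $D = O(x)$ in the regime $e \ge 4n$. The remaining care lies in making the level-by-level aggregation rigorous, since the pieces are only approximately balanced and the two caps must be combined by splitting the geometric sum at $i^\ast$; this is routine but is exactly where the constant $c_{r,s}$ and the precise exponent $3 + 1/(r-1)$ get pinned down.
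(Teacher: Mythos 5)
First, a point of orientation: the paper does not prove this statement at all --- it is quoted from Pach--Spencer--T\'oth, and the paper explicitly notes it is a special variant of their general monotone-property theorem (Theorem~\ref{thm:crossing_general}). Your bisection-width route is in fact the method PST use to prove that \emph{general} theorem, and most of your skeleton is sound: the K\H{o}v\'ari--S\'os--Tur\'an cap $c_1m^{2-1/r}$ on every piece, the Pach--Shahrokhi--Szegedy inequality $b(H)^2 \le C(\textrm{cr}(H)+\sum_v \deg_H(v)^2)$, the Cauchy--Schwarz aggregation over the at most $2^i$ vertex-disjoint level-$i$ pieces giving $\beta_i \le \sqrt{2^i C(x+D)}$ (using $\sum_H \textrm{cr}(H)\le x$ from an optimal drawing of $G$), the geometric sum up to the critical level, and the exponent algebra with $2^{i^\ast}\approx (n^{2-1/r}/e)^{r/(r-1)}$, which correctly yields $x+D \ge c\,e^{3+1/(r-1)}/n^{2+1/(r-1)}$.

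The genuine gap is your final step, ``forcing $D = O(x)$ in the regime $e \ge 4n$'': this is false, and it is exactly the obstruction that forces the hypothesis $e \ge cn\log^2 n$ in PST's general theorem rather than $e \ge 4n$. The moment inequality $\sum_v \binom{\deg v}{r} \le (s-1)\binom{n}{r}$ gives, via H\"older, only $D = O(n^{3-2/r})$, which is below the target $e^{3+1/(r-1)}/n^{2+1/(r-1)}$ only when $e$ is polynomially larger than $n$ (for $r=2$, roughly $e \ge n^{5/4}$). For a concrete failure at the bottom of the range, take $r=2$ and let $G$ be a star $K_{1,\lfloor n/2\rfloor}$ together with a bounded-degree $K_{2,s}$-free graph on the remaining vertices bringing the edge count up to $4n$: this $G$ is $K_{2,s}$-free, has $D \ge n^2/4$, while the claimed bound (and the truth, by the ordinary crossing lemma) is only $\Theta(n)$; the inequality $x + D \ge c\,e^{4}/n^{3}$ is then carried entirely by $D$ and says nothing about $x = \textrm{cr}(G)$. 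So as written your argument establishes the stated bound only for $e$ above a polynomial threshold in $n$ --- essentially re-deriving Theorem~\ref{thm:crossing_general} with $\alpha = 1-1/r$ --- and not under the hypothesis $e \ge 4n$ of the statement. Closing the gap needs a genuinely additional idea: PST obtain their $e \ge 4n$ versions of such bounds by a separate counting argument rather than by recursive bisection, and any attempt to rescue the bisection route must first tame the high-degree vertices (vertex splitting, degree truncation, or level-by-level absorption of the degree term), which is precisely where logarithmic losses in the edge-count hypothesis creep in.
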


With Theorem~\ref{thm:crossing}, the rest of the argument is rather simple.

\begin{proof}[Proof of Theorem~\ref{thm-grids}]
Let $P$ be the set of $m$ points, $\mathcal{L}$ be the set of $n$ lines, and $I=|I(P,\mathcal{L})|$. 
We use $P'$ to denote the set of $n$ points dual to lines from $\mathcal{L}$ and $\mathcal{L}'$ to denote the set of $m$ lines dual to points from $P$.
Let $G$ be the graph whose vertex set is formed by the points of $P'$, and whose edges connect the $i$th and the $(i+1)$st vertex along each line from $\mathcal{L}'$ taken from the left, where $i$ is odd.
Then, $G$ has $n$ vertices and at least $(I-m)/2$ edges, and any two edges of $G$ on the same line from $\mathcal{L}'$ are disjoint. 

We assume that $(I-m)/2 \geq 4n$ and $(I-m)/2 \geq I/4$, as otherwise $I \in O(n)$ or $I \in O(m)$ and we are done by choosing $c$ large enough.
Note that $G$ does not contain $K_{t,t}$ as a subgraph, since a subgraph of $G$ isomorphic to $K_{t,t}$ corresponds to a subconfiguration isomorphic to $(P_0,\mathcal{L}_a\cup\mathcal{L}_b)$ in $(P,\mathcal{L})$.
This is because it follows from our choice of $G$ that there are no two edges of a copy of $K_{t,t}$ in $G$ lying on the same line from $\mathcal{L}'$ and then, by duality, the copy of $K_{t,t}$ in $G$ corresponds to two sets of $t$ lines from $\mathcal{L}$ where any two such lines share a point from $P$ and these intersections give $t^2$ distinct points from $P$.

Thus, Theorem~\ref{thm:crossing} for $r=t$ implies that 
\[\textrm{cr}(G)\geq c'\frac{((I-m)/2)^{3+1/(t-1)}}{n^{2+1/(t-1)}}\geq c'\frac{(I/4)^{3+1/(t-1)}}{n^{2+1/(t-1)}}\]
for some constant $c'=c'(t)>0$.
On the other hand, since any two lines intersect at most once, we also have $\textrm{cr}(G)\leq \binom{m}{2}$.
Combining these upper and lower bounds on $\textrm{cr}(G)$ implies the result.
\end{proof}

Note that it follows from the proof that the bound on $|I(P,\mathcal{L})|$ remains valid for any configuration $(P_0,\mathcal{L}_0)$ such that the graph $H$ corresponding to $G(P_0,\mathcal{L}_0)$ in $G$ is bipartite, as $H$ is then a subgraph of $K_{t,t}$ for some $t$.

\section{Proof of Theorem~\ref{thm-subdivision}}

In this section, we prove the upper bound
\[|I(P,\mathcal{L}) |\in O(m^{3/4-\delta}n^{1/2-\delta}+m\log^2{m}+n)\]
for some constant $\delta=\delta(k)>0$, all configurations $(P,\mathcal{L})$, where $|P|=m$ and $|\mathcal{L}|=n$, that do not contain a subdivided $k$-clique.
Our approach is the same as in the proof of Theorem~\ref{thm-grids}; we only use a different variant of the Crossing lemma, proved by Pach, Spencer, and T\'{o}th~\cite{pachspencertoth}. 

A graph property $\mathcal{P}$ is $\emph{monotone}$ if whenever a graph $G$ satisfies $\mathcal{P}$, then every subgraph of $G$ also satisfies $\mathcal{P}$, and whenever graphs $G_1$ and $G_2$ satisfy $\mathcal{P}$, then their disjoint union also satisfies $\mathcal{P}$.
We use $ex(n,\mathcal{P})$ to denote the maximum number of edges of an $n$-vertex graph satisfying $\mathcal{P}$.
If $\mathcal{P}$ is the property that our graph does not contain a subgraph isomorphic to a fixed graph $H$, then we write $ex(n,H)$ instead of $ex(n,\mathcal{P})$.

\begin{theorem}[\cite{pachspencertoth}]
\label{thm:crossing_general}
Let $\mathcal{P}$ be a monotone graph property with $ex(n,\mathcal{P}) \in O(n^{1+\alpha})$ for some $\alpha>0$. 
Then, there exists two constants $c,c'>0$ such that for any $n$ vertex graph $G$ with property $\mathcal{P}$ and with at least $e\geq cn\log^2 n$ edges we have
\[\textrm{cr}(G)\geq c'\frac{e^{2+1/\alpha}}{n^{1+1/\alpha}}.\]
\end{theorem}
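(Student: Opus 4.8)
The plan is to prove the bound by a recursive bisection argument, which is the standard route to crossing-number inequalities for sparse monotone classes and is the natural generalization of the proof of the classical Crossing lemma of Ajtai et al.~\cite{ACNS}. Throughout, I would fix an optimal drawing of $G$, write $t=e/n$ for the average degree, and set $D(G)=\sum_{v}d_v^2$. The engine of the argument is the inequality relating the bisection width $b(G)$ --- the minimum number of edges whose removal partitions $V(G)$ into two classes of sizes between $n/3$ and $2n/3$ --- to the crossing number, namely $b(G)\le c_1\bigl(\sqrt{\textrm{cr}(G)}+\sqrt{D(G)}\bigr)$ for an absolute constant $c_1$ (Leighton's embedding method). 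Since $\mathcal{P}$ is monotone, every subgraph of $G$ again has property $\mathcal{P}$, and since the crossing number is monotone under taking subgraphs, it suffices to prove the lower bound for a conveniently chosen subgraph of $G$.

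First I would recurse: starting from $G$, repeatedly apply the bisection inequality to every current piece, splitting it into two parts and deleting the cut edges, until all pieces have at most $n_0$ vertices, where $n_0$ is a threshold to be optimized. At level $j$ of this recursion there are $O(2^j)$ pieces, each of size $\Theta(n/2^j)$; the pieces are vertex-disjoint, so their crossing numbers sum to at most $\textrm{cr}(G)$ and their degree-squared sums to at most $D(G)$. Applying the bisection inequality to each piece and summing with Cauchy--Schwarz gives that the number of edges cut at level $j$ is at most $c_1\bigl(\sqrt{2^j\,\textrm{cr}(G)}+\sqrt{2^j D(G)}\bigr)$. Summing the resulting geometric series over the $O(\log(n/n_0))$ levels, the total number of cut edges is $O\bigl(\sqrt{n/n_0}\,(\sqrt{\textrm{cr}(G)}+\sqrt{D(G)})\bigr)$. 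Every edge of $G$ is either cut during the recursion or survives inside one of the final pieces; since each final piece has at most $n_0$ vertices and property $\mathcal{P}$, it has at most $\textrm{ex}(n_0,\mathcal{P})=O(n_0^{1+\alpha})$ edges, and there are $O(n/n_0)$ of them, so the surviving edges number $O(n\,n_0^{\alpha})$. This yields the master inequality $e = O\bigl(\sqrt{n/n_0}\,\sqrt{\textrm{cr}(G)}+\sqrt{n/n_0}\,\sqrt{D(G)}+n\,n_0^{\alpha}\bigr)$.

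It then remains to choose $n_0$ and to neutralize the two error terms. Assuming the degrees are within a constant factor of the average --- which I would arrange by first passing to a nearly regular subgraph --- one has $D(G)=O(e^2/n)$, so $\sqrt{n/n_0}\,\sqrt{D(G)}=O(e/\sqrt{n_0})$, which is at most $e/4$ once $n_0$ exceeds a suitable absolute constant. Choosing $n_0=\Theta\bigl((e/n)^{1/\alpha}\bigr)$ balances the remaining surviving-edge term $n\,n_0^{\alpha}=\Theta(e)$ against $e$ as well, so that after absorbing both error terms the master inequality collapses to $e=O\bigl(\sqrt{n/n_0}\,\sqrt{\textrm{cr}(G)}\bigr)$. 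Squaring and substituting $n_0=\Theta((e/n)^{1/\alpha})$ gives $\textrm{cr}(G)=\Omega\bigl(e^2 n_0/n\bigr)=\Omega\bigl(e^{2+1/\alpha}/n^{1+1/\alpha}\bigr)$, which is the claimed bound.

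The main obstacle I anticipate is the degree-squared term $D(G)=\sum_v d_v^2$: for a general $\mathcal{P}$-graph this can be as large as $\Theta(e\Delta)$ with $\Delta$ close to $n$, in which case the term $\sqrt{n/n_0}\,\sqrt{D(G)}$ swamps $e$ and the argument stalls. Controlling it --- by reducing to a nearly regular subgraph, which loses logarithmic factors, together with the bookkeeping of the $O(\log(n/n_0))$ recursion levels --- is precisely what forces the hypothesis $e\ge cn\log^2 n$ and is the delicate part of the proof; the crossing bound itself then comes out clean, with the logarithms confined to this threshold. A secondary point to check carefully is that the bisection inequality is applied only to pieces still large enough for it to be meaningful, and that passing to a subgraph does not weaken the final bound, both of which are guaranteed by the monotonicity of $\mathcal{P}$ and of the crossing number and by the threshold on $e$.
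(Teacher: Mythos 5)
This theorem is imported by the paper from Pach, Spencer, and T\'{o}th \cite{pachspencertoth} without proof, so the yardstick is the original argument there; your architecture --- the Leighton-type inequality $b(G)\le c_1\bigl(\sqrt{\textrm{cr}(G)}+\sqrt{\sum_v d_v^2}\bigr)$, recursive bisection down to pieces of size $n_0$, the extremal bound $\textrm{ex}(n_0,\mathcal{P})=O(n_0^{1+\alpha})$ on the surviving edges, and the balancing choice $n_0=\Theta\bigl((e/n)^{1/\alpha}\bigr)$ --- is exactly the strategy of that proof, and your final arithmetic is correct. The genuine gap is the step ``pass to a nearly regular subgraph, losing logarithmic factors.'' No regularization lemma of this strength exists at this generality: the Erd\H{o}s--Simonovits-type lemmas produce an almost-regular subgraph on a possibly much smaller vertex set with no guarantee that $e_H=\Omega(e)$ (a disjoint union of stars already shows that near-regular subgraphs can carry only a tiny fraction of the edges), and, more importantly, any loss of a factor $\log^{O(1)}n$ in the edge count is \emph{not} confined to the threshold, contrary to your closing claim: since the crossing bound is applied to the subgraph $H$, you only get $\textrm{cr}(G)\ge\textrm{cr}(H)=\Omega\bigl(e_H^{2+1/\alpha}/n^{1+1/\alpha}\bigr)$, and the polylogarithmic loss in $e_H$ reappears, raised to the power $2+1/\alpha$, in the conclusion --- whereas the stated bound is clean. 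So the delicate point you correctly flag is not actually resolved by the plan as written.

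A sound way to force $\sum_v d_v^2=O(e^2/n)$ while staying compatible with the monotonicity hypothesis is vertex splitting inside an optimal drawing: replace each vertex $v$ with $d_v>\lceil 2e/n\rceil$ by several copies placed in a small neighborhood of $v$, each inheriting a cyclically consecutive bundle of at most $\lceil 2e/n\rceil$ edges; this does not increase the number of crossings, at most doubles the number of vertices, and bounds the degree sum as required. Crucially, re-identifying the copies maps every final piece of the recursion to a subgraph of $G$ with at most as many vertices and the same edges, so the property $\mathcal{P}$ and the bound $e_i\le \textrm{ex}(n_i,\mathcal{P})$ still apply to the pieces; alternatively one follows the bookkeeping of \cite{pachspencertoth} directly, which is where the hypothesis $e\ge cn\log^2 n$ is actually consumed. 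A second, more minor inaccuracy: bisection yields $1/3$--$2/3$ splits, so at level $j$ the pieces are not all of size $\Theta(n/2^j)$, the recursion runs for $\log_{3/2}(n/n_0)$ levels, and summing $\sqrt{(\#\text{pieces at level }j)\cdot\textrm{cr}(G)}$ level by level can cost an extra $\log$ factor, since the piece count stalls at $n/n_0$ over the last $\Theta(\log(n/n_0))$ levels. This is repaired by grouping the bisected pieces by dyadic size scale rather than by recursion depth (each root-to-leaf path of the bisection tree meets each scale $O(1)$ times), which restores the geometric sum $O\bigl(\sqrt{(n/n_0)\,\textrm{cr}(G)}\bigr)$ that your sketch assumes.
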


We also use the following Tur\'{a}n-type bound by Janzer~\cite{janzer19}.

\begin{theorem}[\cite{janzer19}]
\label{thm-TuranMaxDegree}
Let $H$ be a subdivision of $K_k$ with $k \geq 3$.
Then, there exists a constant $c = c(k) > 0$ such that $ex(n, H) \leq cn^{3/2 - 1/(4k-6)}$.
\end{theorem}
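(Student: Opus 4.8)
The plan is to prove Theorem~\ref{thm-TuranMaxDegree} by the \emph{dependent random choice} method: from the edge density of $G$ we extract a set $U$ of vertices every small subset of which has many common neighbors, and then we embed $H$ greedily into $G$ using $U$. Fix the bipartition $(A,B)$ of $H$ so that every vertex of $A$ has degree at most $r$, and write $a=|A|$, $b=|B|$, and $h=a+b$. Let $G$ be a graph on $n$ vertices with at least $cn^{2-1/r}$ edges, where the constant $c=c(H)$ will be chosen large at the very end; its average degree is then $d=2e(G)/n\ge 2cn^{1-1/r}$. The goal is to produce a set $U\subseteq V(G)$ with $|U|\ge b+r$ such that every subset of $U$ of size at most $r$ has at least $h$ common neighbors in $G$.

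First I would establish the probabilistic core. Choose vertices $v_1,\dots,v_t\in V(G)$ independently and uniformly at random (with repetition), for a parameter $t$ fixed later, and let $W=\bigcap_{i=1}^{t}N(v_i)$ be their common neighborhood. By convexity of $x\mapsto x^{t}$,
\[
\mathbb{E}[|W|]=\sum_{u\in V(G)}\left(\frac{\deg(u)}{n}\right)^{t}\ge n\left(\frac{d}{n}\right)^{t}=\frac{d^{t}}{n^{t-1}}.
\]
Call an $r$-element set $S\subseteq V(G)$ \emph{bad} if it has fewer than $h$ common neighbors, and let $Y$ count the bad sets contained in $W$. For a fixed bad $S$ we have $\Pr[S\subseteq W]=(|N(S)|/n)^{t}<(h/n)^{t}$, since $S\subseteq W$ means each $v_i$ lies in the common neighborhood of $S$. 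Hence $\mathbb{E}[Y]<\binom{n}{r}(h/n)^{t}$, and there is a concrete choice of $v_1,\dots,v_t$ with
\[
|W|-Y\ge \frac{d^{t}}{n^{t-1}}-\binom{n}{r}\left(\frac{h}{n}\right)^{t}.
\]
Deleting one vertex from each bad set inside that $W$ yields a set $U$ of at least $|W|-Y$ vertices with no bad $r$-subset; since shrinking a set only enlarges its common neighborhood, every subset of $U$ of size at most $r$ then has at least $h$ common neighbors (here one extends subsets of size below $r$ to size $r$ inside $U$, which is why we ask for $|U|\ge r$).

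Next I would fix the parameters. Taking $t=r$ makes both terms constant: the main term becomes $d^{r}/n^{r-1}\ge (2c)^{r}$, because $d\ge 2cn^{1-1/r}$ cancels the factor $n^{r-1}$ in the denominator, while the error term satisfies $\binom{n}{r}(h/n)^{r}\le h^{r}/r!$. Thus $|U|\ge (2c)^{r}-h^{r}/r!$, and choosing $c=c(H)$ large enough forces $|U|\ge b+r$, as required; this cancellation is precisely what makes the threshold $cn^{2-1/r}$ the right one. Finally I would embed $H$: map $B$ injectively into $U$, which is possible as $|U|\ge b$, and then process the vertices of $A$ one at a time. A vertex $x\in A$ has at most $r$ neighbors, all in $B$, whose images form a set of size at most $r$ inside $U$; this set has at least $h$ common neighbors, and since at most $b+(a-1)<h$ vertices of $G$ have been used already, a free common neighbor exists and serves as the image of $x$. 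This builds a copy of $H$ in $G$, contradicting $H$-freeness and proving $\mathrm{ex}(n,H)\le cn^{2-1/r}$. I expect the main obstacle to be the bookkeeping of the probabilistic step, namely pinning down the single parameter $t=r$ for which both the expected common-neighborhood size and the expected number of bad sets stay bounded by constants, so that the whole argument reduces to the one inequality $(2c)^{r}-h^{r}/r!\ge b+r$; once this balance is secured, the greedy embedding is entirely routine.
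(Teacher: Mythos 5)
Your proof is correct: this theorem is quoted in the paper from Alon, Krivelevich, and Sudakov \cite{aks03} without proof, and your dependent random choice argument (with $t=r$, the set $U$ free of bad $r$-subsets, and the greedy embedding of $B$ into $U$ followed by the vertices of $A$) is essentially the original proof from that cited source. The parameter balance $(2c)^r - h^r/r! \ge b+r$ and the counting $b+(a-1)<h$ in the embedding both check out, so no further commentary is needed.
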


The rest of the proof is similar to the proof of Theorem~\ref{thm-grids}, only enforcing the occurrence of the subdivided $k$-clique is more involved.

\begin{proof}[Proof of Theorem~\ref{thm-subdivision}]
Let $P$ be the set of $m$ points, $\mathcal{L}$ be the set of $n$ lines, and $I=|I(P,\mathcal{L})|$. 
Let $G$ be the graph whose vertex set is formed by the points of $P$ and whose edges connect the $i$th and the $(i+1)$st vertex along each line from $\mathcal{L}$ taken from the left, where $i$ is odd.
Then, $G$ has $m$ vertices and at least $(I-n)/2$ edges, and any two edges of $G$ on the same line from $\mathcal{L}$ are disjoint. 
We assume that $(I-n)/2 \in \Omega(m\log^2{m})$ and $(I-n)/2 \geq I/4$, as otherwise $I \in O(n)$ or $I \in O(m\log^2{m})$ and we are done by choosing $c$ large enough.

For integers $q \geq 2$, $m \geq 2$, and $n \geq 1$, we use $R_m(n;q)$ to denote the $q$-color Ramsey number of the complete $m$-uniform hypergraph $K^{(m)}_n$ on $n$ vertices.
That is, $R_m(n;q)$ is the smallest positive integer $N$ such that any $q$-coloring of the hyperedges of $K^{(m)}_N$ contains a monochromatic copy of $K^{(m)}_n$.
Such a number exists by Ramsey's theorem.

Let $R=R_4(2k;25)$ and $R' = R_3(2R;7)$. 
We show that if $G$ contains a 1-subdivision of $K_{R'}$ as a subgraph, then $(P,\mathcal{L})$ contains a subdivided $k$-clique as a subconfiguration.
Assume that there is such a 1-subdivision $H$ of $K_{R'}$ in $G$.
We label the black vertices of $H$ as $b_i$ with $i \in [R']$.
For two black vertices $b_i$ and $b_j$ in $H$, we use $w(b_i,b_j)$ to denote the white vertex that is the intersection of the two lines from $\mathcal{L}$ containing $b_i$ and $b_j$.

We color each triple $(b_{i_1},b_{i_2},b_{i_3})$ with $1 \leq i_1 < i_2 < i_3 \leq R'$ by one of $7=1+3!$ colors as follows.
For a permutation $\pi \in S_3$, we color $(b_{i_1},b_{i_2},b_{i_3})$ with $\pi$ if the line $\overline{b_{i_{\pi(1)}}w(b_{i_{\pi(1)}}b_{i_{\pi(2)}})}$ is in $\mathcal{L}$ and contains the point $b_{i_{\pi(3)}}$.
If there are more choices, we select the color arbitrarily.
If there is no such $\pi \in S_3$, then we color $(b_{i_1},b_{i_2},b_{i_3})$ with color $\emptyset$.
By the choice of $R'$, there is a $2R$-tuple of black vertices from $H$ with all triples monochromatic.
We let $H_1$ be the 1-subdivision of $K_{2R}$ that is a subgraph of $H$ and contains these $2R$ black vertices.

We show that there is a 1-subdivision $H_2$ of $K_R$ that is a subgraph of $H_1$ such that no line containing an edge of $H_2$ contains two black vertices of $H_2$.
First, assume that all triples of vertices from $H_1$ have color $\emptyset$.
Then, for any edge of $H_1$, the line containing the edge does not contain any other black point, as otherwise, we have a triple of black vertices from $H_1$ that does not have the color $\emptyset$.
We can then set $H_2$ as any 1-subdivision of $K_R$ that is a subgraph of~$H_1$.
Here, we used the fact that $G$ cannot contain two edges that share a vertex and lie on a common line from $\mathcal{L}$.
Now, assume that all triples of vertices have color $\pi$ for some permutation $\pi \in S_3$.
If $\pi(1)=1$, $\pi(2)=2$, $\pi(3)=3$, then we let $b_{i_1}$ and $b_{i_2}$ be the two vertices of $H_1$ with the smallest labels and with $i_1<i_2$.
Considering the triples $(b_{i_1},b_{i_2},b_{i_3})$ for every $b_{i_3}$ from $H_1$ with $i_3 > i_2$, we see that, by the choice of the coloring, all such points $b_{i_3}$ lie on the line $L=\overline{b_{i_1}w(b_{i_1}b_{i_2})}$. 
We take all these $2R-2 \geq R$ black points and let $H_2$ be the 1-subdivision of $K_R$ containing $R$ of these black points that is a subgraph of $H_1$.
By the choice of $G$, the line $L$ does not contain any white point of $H_2$, since such a point would be connected by two consecutive edges of $G$ on $L$.
We proceed analogously for other permutations $\pi \in S_3$, always obtaining the 1-subdivision of $K_R$ with the desired properties.

We now color each 4-tuple $(b_{i_1},b_{i_2},b_{i_3},b_{i_4})$ of black vertices from $H_2$ with $1 \leq i_1 < i_2 < i_3 < i_4 \leq R$ by one of $25=1+4!$ colors as follows.
We color $(b_{i_1},b_{i_2},b_{i_3},b_{i_4})$ by a color $\pi$ for a permutation $\pi \in S_4$ if $w(b_{i_{\pi(1)}},b_{i_{\pi(2)}})$ and $w(b_{i_{\pi(3)}},b_{i_{\pi(4)}})$ lie on a line from $\mathcal{L}$ containing $b_{i_{\pi(1)}}$.
If there are more choices, we select the color arbitrarily.
If there is no such permutation $\pi$, we color $(b_{i_1},b_{i_2},b_{i_3},b_{i_4})$ with color $\emptyset$.
By the choice of $R$, there is a $2k$-tuple of black vertices from $H_2$ with all 4-tuples monochromatic.
We let $H_3$ be the 1-subdivision of $K_{2k}$ that is a subgraph of~$H_2$ and contains these $2k$ black vertices.

We show that there is a 1-subdivision $H_4$ of $K_k$ that is a subgraph of $H_3$ such that no line containing an edge $H_4$ contains another vertex of $H_4$.
Let $b_i$ and $w(b_i,b_j)$ be the two vertices of $H_3$ forming an edge of $H_3$ contained in a line $L'$ from $\mathcal{L}$.
Then, $L'$ does not contain another black vertex from $H_3$, since $H_3$ is a subgraph of $H_2$.

Assume all 4-tuples of black vertices from $H_3$ are colored with $\emptyset$.
If $L'$ contains some other white vertex, then, by the choice of $G$, it is $w(b_{i'},b_{j'})$ for some $b_{i'},b_{j'}$ with $i',j' \notin \{i,j\}$ and there is a 4-tuple containing $b_i,b_j,b_{i'},b_{j'}$ that is not colored with $\emptyset$, which is impossible. 
We can then set $H_4$ as any 1-subdivision of $K_k$ that is a subgraph of $H_3$.

Thus, we assume that all 4-tuples of black vertices from $H_3$ are colored with $\pi$ for some permutation $\pi \in S_4$. 
We show that all these cases are impossible.
Let $b'_1,\dots,b'_{2k}$ be the vertices of $H_3$ such that the label of $b'_i$ in $H$ is smaller than the label of $b'_j$ in $H$ if and only if $i<j$.
First, we observe that, since $2k \geq 6$, either $\pi(1)$ and $\pi(2)$ are consecutive in $<$ or $\pi(3)$ and $\pi(4)$ are consecutive in $<$.
If, for example, $\pi(1)<\pi(3)<\pi(2)<\pi(4)$, then by considering 4-tuples $(b'_1,b'_3,b'_5,b'_6)$ and $(b'_1,b'_4,b'_5,b'_6)$, we see that the line containing the edge $b'_1w(b'_1,b'_5)$ also contains white points $w(b'_3,b'_6)$ and $w(b'_4,b'_6)$.
On the other hand, by considering the 4-tuples $(b'_2,b'_3,b'_5,b'_6)$ and $(b'_2,b'_4,b'_5,b'_6)$, the line containing the edge $b'_2w(b'_2,b'_5)$ also contains white points $w(b'_3,b'_6)$ and $w(b'_4,b'_6)$.
This is impossible as we then have a line containing an edge of $H_3$ and two black points of $H_3$.
The other cases are analogous.
In fact, $\pi(3)$ and $\pi(4)$ have to be consecutive in $<$.
If, for example, $\pi(3)<\pi(1)<\pi(2)<\pi(4)$, then the 4-tuples $(b'_1,b'_2,b'_4,b'_5)$, $(b'_1,b'_2,b'_4,b'_6)$, $(b'_1,b'_3,b'_4,b'_5)$, and $(b'_1,b'_3,b'_4,b'_6)$ similarly as before  imply that there is a line containing $b'_2w(b'_2b'_4)$ and $b'_3w(b'_3b'_4)$, which is forbidden in $H_3$. 
The remaining cases are again analogous.
Similarly, if $\pi(1)$ and $\pi(2)$ are consecutive, but $\pi(3)$ and $\pi(4)$ are not, then we have, say, $\pi(3)<\pi(1)<\pi(2)<\pi(4)$.
If we consider the 4-tuples $(b'_1,b'_3,b'_5,b'_6)$, $(b'_2,b'_3,b'_5,b'_6)$, $(b'_1,b'_4,b'_5,b'_6)$, and $(b'_2,b'_4,b'_5,b'_6)$, then we obtain a line that contains $b'_3w(b'_3b'_5)$ and $b'_4w(b'_4b'_5)$, which is impossible.
Finally, we show that the last case when both $\pi(1)$ and $\pi(2)$ are consecutive in $<$ as well as $\pi(3)$ and $\pi(4)$ is impossible as well.
For example, if $\pi(1)<\pi(2)<\pi(3)<\pi(4)$, then it suffices to consider the 4-tuples $(b'_1,b'_2,b'_4,b'_5)$, $(b'_1,b'_2,b'_4,b'_6)$, $(b'_2,b'_3,b'_4,b'_5)$, and $(b'_2,b'_3,b'_4,b'_6)$ and find a line containing $b'_1w(b'_1b'_2)$ and $b'_2w(b'_2b'_3)$, which is forbidden in $H_3$. 

Thus, we indeed get a 1-subdivision $H_4$ of $K_k$ in $G$ such that no line containing an edge of $H_4$ contains another vertex of $H_4$. 
It follows that the vertex set of $H_4$ together with the set of lines that contain an edge of $H_4$ forms a subdivided $k$-clique in $(P,\mathcal{L})$.
It follows that if $(P,\mathcal{L})$ does not contain subdivided $k$-clique as a subconfiguration, then $G$ does not contain 1-subdivision of $K_{R'}$ as a subgraph.
In particular, $G$ contains at most $ex(m,H)$ edges.

Since $H$ is a 1-subdivision of $K_{R'}$,  Theorem~\ref{thm-TuranMaxDegree} implies that $ex(m,H) \in O(m^{3/2-1/(4R'-6)})$.
Recall that $G$ contains at least $\Omega(m\log^2{m})$ edges.
Therefore, using Theorem~\ref{thm:crossing_general} for $\alpha = 1/2 - 1/(4R'-6)$, we get that 
\[\textrm{cr}(G)\geq c'\frac{(I-n)^{2+\frac{1}{1/2-1/(4R'-6)}}}{m^{1+\frac{1}{1/2-1/(4R'-6)}}} = c'\frac{(I-n)^{\frac{4R'-7}{R'-2}}}{m^{\frac{3R'-5}{R'-2}}} \geq c'\frac{(I/4)^{\frac{4R'-7}{R'-2}}}{m^{\frac{3R'-5}{R'-2}}}\]
for some constant $c'=c'(k)>0$.
On the other hand, since any two lines intersect at most once, we also have $\textrm{cr}(G)\leq \binom{n}{2}$.
Combining these upper and lower bounds on $\textrm{cr}(G)$ gives
\[I \in O\left(n^{\frac{2R'-4}{4R'-7}}m^{\frac{3R'-5}{4R'-7}}\right) = O\left(n^{1/2-\frac{1}{8R'-14}}m^{3/4-\frac{1}{16R'-28}}\right),\]
which implies the result.
\end{proof}

\section{Proof of Theorem~\ref{thm-lowerbound}}

We show that there exists a point-line configuration $(P,\mathcal{L})$ such that $|P|=|\mathcal{L}|=n$, $|I(P,\mathcal{L})|\geq n^{\frac{5}{4}-\frac{1}{2k}+o(1)}$, and the incidence graph of $(P,\mathcal{L})$ does not contain a subconfiguration isomorphic to a subdivided $k$-clique.

Following the proof of Lemma~9 from~\cite{tomSuk21}, we use a random subset of the \emph{standard point-line configuration} $(P_0, \mathcal{L}_0)$ with $\Theta(N^{4/3})$ incidences for some $N$ to be chosen later where
\[P_0 = \{(a,b) \in \mathbb{N}^2 \colon a < N^{1/3}, b<N^{2/3}\}\]
and
\[\mathcal{L}_0 = \{\{(x,y) \in \mathbb{R}^2 \colon y=ax+b\} \colon a < N^{1/3}, b<N^{2/3}\}.\]

Let $B_0$ be the incidence graph of $(P_0,\mathcal{L}_0)$ and $H$ be the graph on $P_0$ in which $p$ and $p'$ are joined by an edge if $p,p'\in L$ for some $L\in \mathcal{L}_0$.

\begin{lemma}[Claim 10 in \cite{tomSuk21}]
\label{lem-common}
Let $p,p'\in P_0$ be distinct vertices. The number of common neighbors of $p$ and $p'$ in $H$ is at most $N^{1/3+o(1)}$.
\end{lemma}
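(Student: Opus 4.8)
The plan is to parametrise the common neighbours of $p$ and $p'$ by their horizontal coordinate and to reduce the count to a short number-theoretic sum. Write $p=(x_1,y_1)$ and $p'=(x_2,y_2)$, so $0\le x_i<N^{1/3}$ and $0\le y_i<N^{2/3}$. Every line of $\mathcal{L}_0$ is non-vertical and meets each vertical line exactly once, so a common neighbour $q=(x_0,y_0)$ satisfies $x_0\in\{0,\dots,N^{1/3}-1\}\setminus\{x_1,x_2\}$; in particular there are fewer than $N^{1/3}$ admissible values of $x_0$. The first step is to observe that, for a fixed $x_0$, the condition that $q$ is joined to $p$ in $H$ forces $(y_0-y_1)/(x_0-x_1)$ to be an admissible slope, so $y_0$ lies in the arithmetic progression $S_1=\{y_1+s(x_0-x_1)\colon 0\le s<N^{1/3}\}$; likewise being joined to $p'$ forces $y_0\in S_2=\{y_2+s(x_0-x_2)\colon 0\le s<N^{1/3}\}$. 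Dropping the constraints on the intercepts only enlarges these sets, which is harmless for an upper bound.

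Next I would bound $|S_1\cap S_2|$. Each $S_i$ is an arithmetic progression of at most $N^{1/3}$ terms with common difference $x_0-x_i$, so their intersection is an arithmetic progression with common difference $\operatorname{lcm}(|x_0-x_1|,|x_0-x_2|)$ contained in the span of $S_1$. Hence
\[
|S_1\cap S_2|\le N^{1/3}\cdot\frac{|x_0-x_1|}{\operatorname{lcm}(|x_0-x_1|,|x_0-x_2|)}+1 = N^{1/3}\cdot\frac{\gcd(|x_0-x_1|,|x_0-x_2|)}{|x_0-x_2|}+1.
\]
Summing over the admissible $x_0$ and substituting $u=x_0-x_2$ and $\delta=x_2-x_1$, so that $\gcd(|x_0-x_1|,|x_0-x_2|)=\gcd(\delta,u)$, the number of common neighbours is at most
\[
N^{1/3}\sum_{0<|u|<N^{1/3}}\frac{\gcd(\delta,u)}{|u|}+N^{1/3}.
\]

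The crux is to show that this sum is $N^{o(1)}$ when $\delta\neq 0$. I would group the terms according to $g=\gcd(\delta,u)$, which must divide $\delta$, and write $u=gw$; the inner sum over $w$ then becomes a harmonic sum bounded by $O(\log N)$, so the whole sum is at most $d(|\delta|)\cdot O(\log N)$, where $d(\cdot)$ is the divisor function. Since $|\delta|<N^{1/3}$, the standard estimate $d(n)=n^{o(1)}$ gives $d(|\delta|)=N^{o(1)}$, and $\log N=N^{o(1)}$, so the sum is $N^{o(1)}$ and the total is $N^{1/3+o(1)}$, as required. The remaining degenerate case $x_1=x_2$, where $\delta=0$ and the gcd bound above fails, I would treat directly: here $S_1$ and $S_2$ share the common difference $x_0-x_1$, so $S_1\cap S_2\neq\emptyset$ forces $(x_0-x_1)\mid(y_1-y_2)$; the number of such $x_0$ is at most $d(|y_1-y_2|)=N^{o(1)}$, and each contributes at most $|S_1|\le N^{1/3}$ neighbours, again giving $N^{1/3+o(1)}$.

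I expect the main obstacle to be precisely the estimate of the gcd-weighted sum: everything else is bookkeeping about arithmetic progressions, but making the $N^{o(1)}$ bound clean requires the divisor-function input $d(n)=n^{o(1)}$ together with the harmonic-sum cancellation, and one must be careful that it is the small span of $S_1$, namely $N^{1/3}|x_0-x_1|$ rather than the full range $N^{2/3}$, that makes the gcd savings large enough.
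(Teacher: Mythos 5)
Your proof is correct, but note that the comparison here is slightly unusual: the paper does not prove this lemma at all, importing it wholesale as Claim~10 of \cite{tomSuk21}, so your argument should be measured against that source rather than against anything in the present text. Your route --- fix the abscissa $x_0$ of a putative common neighbour $q$ (legitimately excluding $x_0\in\{x_1,x_2\}$ since all lines of $\mathcal{L}_0$ are graphs of functions), observe that the ordinate of $q$ must lie in the intersection of two arithmetic progressions with differences $x_0-x_1$ and $x_0-x_2$ and at most $N^{1/3}$ terms, bound that intersection by $N^{1/3}\gcd(|x_0-x_1|,|x_0-x_2|)/|x_0-x_2|+1$ via the lcm structure, and then control the resulting gcd-weighted harmonic sum by grouping over divisors $g\mid\delta$ --- is a complete and correct self-contained proof, including the separate treatment of the degenerate case $x_1=x_2$ (pedantically, the number of admissible $x_0$ there is at most $2\,d(|y_1-y_2|)$ rather than $d(|y_1-y_2|)$, since $x_0-x_1$ may be a negative divisor, but this is absorbed into $N^{o(1)}$). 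The argument in \cite{tomSuk21} rests on the same number-theoretic input, the divisor bound $d(n)=n^{o(1)}$, but organizes the count by slopes rather than abscissae: a common neighbour determines a pair of admissible integer slopes $(s,s')$ through $p$ and $p'$ respectively; eliminating the coordinates of $q$ shows that $s-s'$ must divide an integer determined by $p$, $p'$, and $s'$ alone, so each of the at most $N^{1/3}$ choices of $s'$ admits only $N^{o(1)}$ choices of $s$, with the parallel case $s=s'$ contributing at most $N^{1/3}$ collinear points directly. The slope parametrization is a bit shorter because it avoids the gcd sum and the lcm bookkeeping for intersecting progressions; your version, in exchange, makes explicit exactly where the saving comes from, namely that the relevant progression is confined to a span of length $N^{1/3}|x_0-x_1|$ rather than the full ordinate range $N^{2/3}$, and it generalizes readily to situations where one counts by columns rather than by directions.
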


The next proposition is analogous to Claim 11 in \cite{tomSuk21}, but for subdivided cliques instead of cycles.

\begin{lemma}
\label{lem-copies}
For every integer $k\geq 3$, the number of incidence graphs of subdivided $k$-cliques in $B_0$ is at most $N^{(k^2+5k)/6+o(1)}$.
\end{lemma}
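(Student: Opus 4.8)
The plan is to count incidence graphs of subdivided $k$-cliques in $B_0$ by parametrizing them according to their black vertices and then bounding the number of ways to choose the white vertices and the lines. A subdivided $k$-clique is determined by $k$ black points together with, for each of the $\binom{k}{2}$ pairs of black points, a choice of white point and a pair of lines realizing the connection. So first I would count the number of ways to choose an ordered (or unordered) $k$-tuple of black vertices from $P_0$. Since $|P_0| = \Theta(N)$, there are at most $N^k$ such choices, contributing a factor of $N^{k+o(1)}$.

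The crucial second step is to bound, for each \emph{fixed} pair of black vertices $(p,p')$, the number of valid ways to extend the configuration to a path of length two through a white vertex. A white vertex $w = w(p,p')$ must be a common neighbor of $p$ and $p'$ in the graph $H$, because $w$ lies on a line through $p$ and on a line through $p'$. By Lemma~\ref{lem-common}, the number of such common neighbors is at most $N^{1/3+o(1)}$. Once the white point is fixed, the two lines $\overline{pw}$ and $\overline{p'w}$ are determined (a line through two distinct points is unique), so there is no additional multiplicative cost from the line choices. Thus each of the $\binom{k}{2}$ pairs contributes a factor of at most $N^{1/3+o(1)}$, giving a total of $N^{\binom{k}{2}/3+o(1)} = N^{(k^2-k)/6+o(1)}$ for all the white vertices combined.

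Multiplying the two contributions gives a bound of
\[
N^{k+o(1)} \cdot N^{(k^2-k)/6+o(1)} = N^{\,(k^2-k)/6 + k + o(1)} = N^{\,(k^2+5k)/6 + o(1)},
\]
which is exactly the claimed bound, since $(k^2 - k)/6 + k = (k^2 - k + 6k)/6 = (k^2 + 5k)/6$. The $o(1)$ terms from Lemma~\ref{lem-common} and the constant-factor overcounting (from orderings of black vertices and the freedom in how white points and lines are labeled, all bounded by constants depending only on $k$) are absorbed into the $N^{o(1)}$ factor.

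The main obstacle I anticipate is ensuring that the counting is set up so that the dependencies line up correctly: one must verify that fixing the black vertices and then independently choosing each white vertex as a common neighbor genuinely dominates the count, rather than undercounting by missing constraints or overcounting by double-choosing lines. The key structural observation that resolves this is that once a white point is chosen for a pair, the two incident lines are forced, so the only genuine freedom lies in (i) selecting the black $k$-tuple and (ii) selecting each white common-neighbor, which is precisely where Lemma~\ref{lem-common} applies. I would also want to be careful that the bound $|P_0| \in \Theta(N)$ is used correctly (giving the $N^{k}$ factor) and that the product over all $\binom{k}{2}$ pairs is valid because the choices for distinct pairs can be made sequentially with each bounded by $N^{1/3+o(1)}$ regardless of earlier choices.
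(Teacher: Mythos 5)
Your proposal is correct and follows essentially the same argument as the paper: choose the $k$ black vertices in at most $N^k$ ways, then bound the choices for each of the $\binom{k}{2}$ white vertices by $N^{1/3+o(1)}$ via Lemma~\ref{lem-common}, yielding $N^{k+(k^2-k)/6+o(1)} = N^{(k^2+5k)/6+o(1)}$. Your additional remark that the two incident lines are forced once a white point is fixed is a correct (and implicitly used) observation in the paper's counting as well.
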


\begin{proof}
We first pick the $k$ black vertices for which we have at most $N^k$ choices.
Then, we pick the $\binom{k}{2} = \frac{k^2-k}{2}$ white vertices.
Since each such white vertex lies in a common neighborhood of two black vertices in $H$, Lemma \ref{lem-common} implies that we have at most $N^{1/3+o(1)}$ choices for each white vertex.
Altogether, we have at most $N^{k+(k^2-k)/6+o(1)} = N^{(k^2+5k)/6+o(1)}$ incidence graphs of subdivided $k$-cliques in $B_0$.
\end{proof}

\begin{proof}[Proof of Theorem~\ref{thm-lowerbound}]
Let $S$ denote the number of subdivided $k$-cliques in $B_0$. 
Let $q\in (0,1)$, and let $P'$ and $\mathcal{L'}$ be subsets of $P_0$ and $\mathcal{L}_0$, respectively, in which each element is present independently with probability $q\geq n^{-1/3+\varepsilon}$ for some $\varepsilon > 0$.
We use $B'$ to denote the subgraph of $B_0$ induced by $P'\cup \mathcal{L}'$. 
Further, let $X$ be the number of incidence graphs of subdivided $k$-cliques in $B'$. 
Then, $\mathbb{E}(|P'|)=\mathbb{E}(|\mathcal{L}'|)=qN$ and $\mathbb{E}(|X|)=q^{(3k^2-k)/2}S$, since $k+\binom{k}{2}+2\binom{k}{2}$ points and lines participate in total in each subdivided $k$-clique.

Let $\mathcal{A}$ be the event that $B'$ satisfies the following properties:
\begin{itemize}
\item $\frac{qN}{2} < |P'|,|\mathcal{L}'| < 2qN$,
\item the degree of every vertex of $B'$ is at most $2qN^{1/3}$,
\item there are at least $\frac{qN}{4}$ points and lines in $B'$ whose degree in $B'$ is between $\frac{qN^{1/3}}{4}$ and $2qN^{1/3}$.
\end{itemize}

Then, by Chernoff's inequality, we have $\Pr(\mathcal{A})>2/3$. 
If $\mathcal{A}$ holds, then we have $|E(B')|\geq \frac{q^2N^{4/3}}{16}$. 
Choose $q$ such that $\mathbb{E}(X)\leq \frac{qN}{128}$. 
We have $\mathbb{E}(|X|)=q^{(3k^2-k)/2}S$ and $S\leq N^{(k^2+5k)/6+o(1)}$ by Lemma~\ref{lem-copies}, so we can choose $q=N^{-\frac{k+6}{9k+6}+o(1)}$.

By Markov's inequality, we have $X\leq \frac{qN}{64}$ with probability at least $\frac{1}{2}$. 
Then, there exists $B'$ such that $\mathcal{A}$ holds and the number of copies of subdivided $k$-cliques in $B'$ is at most $\frac{qN}{64}$. 
Delete a vertex from each copy and let $B''$ be the resulting graph with parts $P''$ and $\mathcal{L}''$. 
In this way, we delete at most $\frac{qN}{64}$ vertices and at most $\frac{qN}{64}\cdot 2qN^{1/3}$ edges.
Then, $B''$ has at least $\frac{q^2N^{4/3}}{16} - \frac{q^2N^{4/3}}{32} \geq\frac{q^2 N^{4/3}}{32}$ edges and no copy of the incidence graph of a subdivided $k$-clique.

If we choose $n$ such hat $\frac{qN}{8}\leq n\leq \frac{qN}{4}$, then $n=N^{1-\frac{k+6}{9k+6}+o(1)}$. 
By sampling random $n$-element subsets of $P''$ and $\mathcal{L}''$, we get that there exists an induced subgraph $B$ of $B''$ with parts $P$ and $\mathcal{L}$, each of size $n$, such that $B$ has at least 
\[\frac{q^2N^{4/3}}{128}=N^{\frac{4}{3}-2\frac{k+6}{9k+6}+o(1)}=n^{\frac{5}{4}-\frac{1}{2k}+o(1)} \]
edges and with no copy of the incidence graph of a subdivided $k$-clique.
\end{proof}

\bibliography{bibliography}
\bibliographystyle{plainnat}

\end{document}